\newtheorem{theorem}{Theorem}[section]
\newtheorem{lemma}[theorem]{Lemma}
\theoremstyle{definition}
\newlist{choices}{enumerate*}{1}
\setlist[choices]{itemsep = 1.125in, label=(\Alph*)}
\providecommand{\topP}[1]{TOP-ST-MIN-P#1}
\providecommand{\topPL}[1]{TOP-ST-MIN-PL#1}
\title{The Team Orienteering Problem with Service Times and Mandatory \& Incompatible Nodes}
\author[1]{Alberto Guastalla}
\author[1]{Roberto Aringhieri}
\author[1]{Pierre Hosteins}
\affil[1]{Department of Computer Science, University of Turin \\
Corso Svizzera 185, 10149 Torino, Italy}
\date{\today}
\begin{document}

\maketitle

\begin{abstract}
The Team Orienteering Problem with Service Times and Mandatory \& Incompatible Nodes 
(TOP-ST-MIN) is a variant of the classic Team Orienteering Problem (TOP), which includes 
three novel features that stem from two real-world problems previously studied by the 
authors.
We prove that even finding a feasible solution is NP-complete.
Two versions of this variant are considered in our study.
For such versions, we proposed two alternative mathematical formulations, a mixed and a 
compact formulations.
Based on the compact formulation, we developed a Cutting-Plane Algorithm (CPA) exploiting 
five families of valid inequalities. 
Extensive computational experiments showed that the CPA outperforms CPLEX in solving the 
new benchmark instances, generated in such a way to evaluate the impact of the three 
novel features that characterise the problem.
The CPA is also competitive for the TOP since it is able to solve almost the same 
number of instances as the state-of-art algorithms.
\end{abstract}


\section{Introduction}
The Team Orienteering Problem (TOP) is a well-known combinatorial optimisation problem \parencite{Butt1994}, which belongs to the general class of Vehicle Routing Problems with Profits \parencite{OP2019}. The TOP that can be formulated by using a complete undirected graph in which each node has a profit and each arc has a travel time. The aim is to identify a given number of routes that maximise the profit from the nodes visited without exceeding a total time budget for all routes. Each route starts from the source node and ends to the destination one. 
Each node can be visited at most once except for the source and the destination. 
The TOP can be viewed as a basic framework that can be extended to address real-world problems 
that can be formulated as a variant of the TOP with some novel features. 


The Ambulances Routing Problem (ARP) consists in the management of the ambulance fleet for finding the tours to transport the injured to hospitals after a disaster \parencite{Aringhieri2021}.
Two groups of patients based on a triage system can be considered in the area affected by the disaster. \emph{Red} patients should be transported to hospitals because they suffer from serious injuries. On the contrary, \emph{green} patients are people who are slightly injured that can be cured in the field. Priority for green patients are introduced as profits to define who needs to be taken care of more urgently. Therefore, each green patient has a profit that can be interpreted as a sort of urgency level weight. Maximising the overall profit for green patients can be considered as a goal to provide an efficient service. On the contrary, red patients must be served thus they identify a set of mandatory nodes. 
The TOP formulation of the ARP introduces service times at nodes in order to consider the amount of time necessary to take care of the patients, a set of mandatory nodes to model red patients and a set of incompatibilities between nodes that force to reach a hospital after having visited one of them.
Another problem that can be formulated as a variant of the TOP is the Daily Swab Test Collection Problem (DSTCP) that arises in the daily collection of swab tests at home \parencite{Aringhieri2022}.

In this paper, we present a new variant of the TOP that generalises all the features introduced in the ARP and the DSTCP. Such a variant is called the Team Orienteering Problem with Service Times and Mandatory \& Incompatible Nodes (TOP-ST-MIN).

The TOP-ST-MIN considers (i) a \emph{service time} on each node, (ii) a set of \emph{mandatory nodes} and (iii) a set of \emph{physical incompatibilities} between nodes. In other words, it is necessary to explore each node for a specific fixed amount of time, some nodes must be compulsory visited, and a subset of arcs is forbidden in the graph. 
In accordance with the literature (see, e.g., \cite{Palomo2015}), we also consider \emph{logical incompatibilities} to provide a more comprehensive analysis.
Such incompatibilities state that nodes of different types cannot be served by the same route. For instance, in the healthcare setting, patients with different infectious diseases can not share the same vehicles. 
In the remainder of the paper we will consider two versions of the problem that could represent different operative contexts, that is the \topP{,} which considers only physical incompatibilities, and the \topPL{,} which includes also the logical ones.


The main contributions of this paper are:
(i) the introduction of a new variant of the TOP called TOP-ST-MIN derived from real-world problems, 
(ii) the proof of NP-completeness of the feasibility problem,
(iii) two new alternative mathematical formulations for the TOP-ST-MIN,
(iv) an exact algorithm based on a cutting-plane approach exploiting also new valid inequalities,
(v) several sets of benchmark instances generated in such a way to evaluate the impact of the new three features, and
(vi) an extensive computational analysis including a comparison with the state of the art algorithms for the TOP.

The paper is organised as follows. 
Section~\ref{sec:review} reports a comprehensive literature analysis and some complexity 
results to demonstrate the novelty and the qualitatively stronger difficulty of the proposed 
variant.
Section~\ref{sec:models} presents two alternative mathematical formulations for TOP-ST-MIN, the second of which is a compact formulation better suited to the exact approach proposed.
Section~\ref{sec:exact-method} proposes an exact algorithm for solving the TOP-ST-MIN based on the cutting-plane methodology that adopts several families of new and standard valid inequalities to strengthen the linear programming formulation. We also introduce a new separation method for the subtour elimination constraints based on the elementary cycles detection on a directed graph.
Section~\ref{sec:instances} described several sets of new benchmark instances generated in such a way to evaluate the impact of the new three features on the optimal solution.
Based on these new benchmark instances, Section~\ref{sec:analysis} presents an extensive computational analysis to validate and to evaluate the proposed computational approach reporting
(i) a comparison between the two alternative mathematical formulations,
(ii) a full evaluation of the impact of the valid inequalities introduced,
(iii) an evaluation of the performance of the proposed exact approach, and
(iv) a comparison of an adapted version of our exact approach with the state-of-art algorithms for the TOP.
Finally, Section~\ref{sec:end} closes the paper.



\section{Literature review and complexity results}
\label{sec:review}

During the last decades, the literature regarding the orienteering problems has grown rapidly due to the fact that they are attractive to the research community. The purpose of this literature review is to focus our attention on the three additional features characterising the TOP-ST-MIN. Table \ref{tab:contributions} shows the most recent works we found and their corresponding general features (denoted by $1$ to $13$) and those related to the TOP-ST-MIN (denoted by A to E). All articles collected were published between the years 2015 and 2023 and approximately half of them are about the Orienteering Problem (OP), while the others are related to the TOP. 

\begin{table}[!ht]
\centering
\begin{adjustbox}{width=1\textwidth,center}
\begin{tabular}{@{}lccccccccccccccccccc@{}}
\toprule
& \multicolumn{13}{c}{\textbf{General features}} & \multicolumn{5}{c}{\textbf{TOP-ST-MIN features}} \vspace*{3pt} \\
\textbf{Reference} & \textbf{(1)} & \textbf{(2)} & \textbf{(3)} & \textbf{(4)} & \textbf{(5)} & \textbf{(6)} & \textbf{(7)} & \textbf{(8)} & \textbf{(9)} & \textbf{(10)} & \textbf{(11)} & \textbf{(12)} & \textbf{(13)} & \textbf{(A)} & \textbf{(B)} & \textbf{(C)} & \textbf{(D)} & \textbf{(E)} \\ \cmidrule(r){1-1} \cmidrule(lr){2-14} \cmidrule(l){15-19}
\cite{Palomo2015} & \checkmark & & & & &  & & & & & & & & & & \checkmark & & \checkmark \\ 
\cite{Kotiloglu2017} & & \checkmark & & &  & & & & & \checkmark & & & & \checkmark & & \checkmark & & \\
\cite{Lin2017}& & \checkmark & & &  & & & & & \checkmark & & & & \checkmark & & \checkmark & & \\
\cite{Palomo2017} & \checkmark & & & & &  & & & & & & & & & & \checkmark & & \checkmark \\
\cite{Lu2018} & \checkmark & & & & &  & & & & & & & & & & \checkmark & & \checkmark \\
\cite{Reyes2018} & & \checkmark & & & & & & & & & & & \checkmark & \checkmark & & & & \\
\cite{Liao2018} & \checkmark & & \checkmark & & & & & \checkmark & & \checkmark & & & \checkmark & \checkmark & & & & \\
\cite{Taylor2018} & \checkmark & & & & & & & & & & & & & \checkmark & & \checkmark & & \\
\cite{Stavropoulou2019} & & \checkmark & & &  & & & & & \checkmark & & \checkmark & & \checkmark & & \checkmark & & \\
\cite{Jin2019} & & \checkmark & & & & & & & & & & & \checkmark & \checkmark & & & & \\
\cite{Thompson2019} & & \checkmark & \checkmark &  &  \checkmark & & & & & & & & & \checkmark & & & & \\
\cite{Assunção2020} & & \checkmark & & & & & & & & & & & & & & \checkmark & & \\
\cite{Zheng2020} & \checkmark & & & & & & & & & \checkmark & & & & \checkmark & & & & \\
\cite{Wisittipanich2020} & & \checkmark & &  & & & & & & \checkmark & \checkmark & & & \checkmark & & & & \\
\cite{Gndling2020TimeDependentTT} & & \checkmark & \checkmark &  & \checkmark & & & & & \checkmark & & & & & \checkmark & & & \\
\cite{Wallace2020} & \checkmark & & &  & & & & \checkmark & & & & \checkmark & & & \checkmark & & & \\
\cite{Sebastia2020} & \checkmark & & &  & & & & \checkmark & & \checkmark & & & & & \checkmark & & & \\
\cite{Hanafi2020} & & \checkmark & & & & & \checkmark & \checkmark & \checkmark & & & & & \checkmark & & & & \\
\cite{Dutta2020} & \checkmark & & & \checkmark & & & & & & & \checkmark & & & \checkmark & & & & \\
\cite{Ruiz2021} & & \checkmark & &  & & \checkmark & \checkmark & & & \checkmark & & & & \checkmark & & & & \\
\cite{Ruiz2021_} & & \checkmark &  &  & & \checkmark & \checkmark & & & \checkmark & & & & \checkmark & & & & \\ 
\cite{Feo2021} & & \checkmark &  &  & & & \checkmark & & & & & & & \checkmark & & & & \\
\cite{Heris2021} & & \checkmark & & & \checkmark & & & & & \checkmark & \checkmark & & & \checkmark & & & & \\
\cite{Aringhieri2021} & & \checkmark &  &  &  &  & & & & & & & & \checkmark & & \checkmark & \checkmark & \\
\cite{Yu2021} & & \checkmark &  &  & \checkmark & & & & & & & & & \checkmark & & & & \\ 
\cite{Vu2021} & \checkmark & & &  &  & & & & \checkmark & \checkmark & & & & \checkmark & & \checkmark & & \checkmark \\
\cite{Porras2022} & \checkmark & & \checkmark &  & & & & & & \checkmark & & & & \checkmark & & & & \\
\cite{Aringhieri2022} & & \checkmark & &  & & & & & & & & & & \checkmark & & & & \\
\cite{Morandi2023} & \checkmark & & &  & & & & & & & & & & & \checkmark & & & \\
\cite{Corrêa2024} & & \checkmark &  &  &  & \checkmark & & & & \checkmark & & & & \checkmark & & \checkmark & & \\
\cmidrule(r){1-1} \cmidrule(lr){2-14} \cmidrule(l){15-19}
\textbf{This paper}  & & \checkmark &  &  &  &  & & & & & & & & \checkmark & & \checkmark & \checkmark & \checkmark \\
\bottomrule
\end{tabular}
\end{adjustbox}
\caption{The most recent contributions collected. The features considered are: (1) Single-Route, (2) Multi-Route, (3) Time-Dependent, (4) Clusters, (5) Variable-Profits, (6) Multi-Modal, (7) Multi-Agent, (8) Multi-Visit, (9) Precedence, (10) Time-Windows, (11) Multi-Objective, (12) Capacity, (13) Stochastic, (A) Fixed Service Times, (B) Variable Service Times, (C) Mandatory Nodes, (D) Physical Incompatibilities, (E) Logical Incompatibilities.}
\label{tab:contributions}
\end{table}

With respect to the TOP-ST-MIN, the most frequent feature found is the service times (fixed and variable) at each node, followed by the mandatory nodes and the incompatibilities. Most of the considered works are about the classic Tourist Trip Design Problem (TTDP) in which a tourist has to visit a subset of points of interest (POIs). Each POI is characterised by a profit, by a service time and can only be visited during a specific time window. Regarding the solution methods, we mainly found metaheuristic approaches, fewer exact algorithms as well as a few matheuristic and tailored heuristic methods.

Table~\ref{tab:contributions} clearly shows that TOP-ST-MIN has not been previously considered in the literature. The sole work that groups the all features introduced (except the physical incompatibilities) is \cite{Vu2021}, which addresses a variant of the TTDP that considers additional constraints such as logical incompatibilities and mandatory nodes. Finally, we would remark that all papers that consider variable service times are variants of the OP except for \cite{Gndling2020TimeDependentTT} and all the articles considering logical incompatibilities consist exclusively in variants of the OP.

The remaining of the section reports the literature review and some complexity results for the TOP-ST-MIN.
In Section~\ref{ssec:3features}, we reports and discusses those papers related to the TOP with at least one of the three 
additional features introduced in the TOP-ST-MIN, some of them are already included in Table~\ref{tab:contributions}.
In Section~\ref{ssec:complexity}, we prove that (i) the TOP-ST-MIN is NP-hard, and that (ii) the introduction of mandatory nodes and physical incompatibilities make the problem of finding a feasible solution NP-complete. The latter result further justifies the need to study the TOP-ST-MIN.

\subsection{Literature review}
\label{ssec:3features}

\paragraph{The service time at nodes.}

Originally, the basic OP and TOP framework does not consider any service time at each node. However, many problems in healthcare, tourism, and in general everything related to a routing problem, may need to consider a certain amount of waiting time at each node to model a specific service. 

The first related work involving this feature has been proposed by \cite{Tang2005}. The authors addressed the Selective Travelling Salesperson Problem with Stochastic service times and travel times (SSTSP). In the SSTSP, service and travel times follow two different continuous probability distributions. A non-negative reward for providing a service is associated with each customer and there is a specified limit on the duration of the solution route. The objective of the SSTSP is to design a priori route that visits a subset of the customers in order to maximize the total profit collected and the probability that the total actual route duration exceeds a given threshold is no larger than a chosen probability value. They formulated the SSTSP as a chance-constrained stochastic program and proposed both exact and heuristic approaches for solving it. The SSTSP is very similar to the Orienteering Problem with Stochastic Travel and Service times (OPSTS) described by \cite{Campbell2011} except for the fact that the source and destination nodes need to be the same.

\cite{EGL2013} introduced the Orienteering Problem with Variable Profits (OPVP) in which a single vehicle can collect the whole profit at the customer after a discrete number of ``passes'' or spending a continuous amount of time. As in the classical orienteering problem, the objective is to determine a maximal profit tour for the vehicle, starting and ending at the depot, and not exceeding a travel time limit.

The TTDP has been studied intensively becoming the most famous application of the OP. In \cite{Lim2018}, the authors proposed an algorithm, called \texttt{PersTour}, for recommending personalised routes using POIs popularity and user interest preferences, which are automatically derived from real-life travel sequences based on geo-tagged photographs. The underlying problem is modelled as an OP with time windows and variable service times. The user interest is based on the visit duration and the POI visit time can be personalised using this user interest measure.

The underlying idea of the TTDP has been extended in order to consider more routes subdivided into multiple days. In fact, \cite{Zheng2020} aimed to design personalised itineraries with hotel selection for multi-day urban tourists. They proposed a linear model to formulate the problem as a variant of the TOP basic framework in which the objective consists in finding the optimal set of daily trips, which start from an hotel and end to another one connecting multiple day paths into a single big path. 

Focusing on the service times at nodes, tourism is undoubtedly one of the most relevant fields in the OP literature because of the investigation of many variants and applications of the TTDP framework. However, there are other research areas, such as healthcare \parencite{Aringhieri2022b}, where the OP or TOP framework with service times could be used. Further, \cite{Jin2019} addressed the intrahospital routing of the phlebotomists. The phlebotomists are responsible for drawing specimens from patients based on doctors orders. The authors formulated the phlebotomist intrahospital routing problem as a team orienteering problem with stochastic rewards and service times. They present an a priori solution approach and derive a method for efficiently sampling the value of a solution.

\paragraph{The mandatory nodes.}
In a real application context, it could be necessary to visit some critical or relevant subset of nodes. In a tourism context, all tourists may share some common interests, which the route planner may impose as mandatory POIs. During an emergency situation, the ambulances have to collect all the patients that suffer of severe pathologies or injuries from a serious accident. For this reason, in a mathematical model, these patients could be modelled as mandatory nodes in a graph.

The first work we found concerning the mandatory nodes has been proposed by \cite{Kotiloglu2017}. The authors described the complex selection for generating personalised route recommendations for tourists. The aim of their work is to generate routes that contain all mandatory POIs maximising the total profit collected from the optional points visited daily. The problem takes also into account different day availabilities, opening hours, limitations on the tour lengths, budgets and other restrictions.

\cite{Taylor2018} formalised the idea of including this notion of mandatory nodes stating a variant of the TTDP with mandatory POIs: the Tour Must See Problem (TMSP). The TMSP is basically a variant of the OP in which there is a subset of mandatory nodes to visit. They proposed an integer linear model for solving the TMSP. To formalise the idea of mandatory nodes, \cite{Assunção2020} proposed a new variant of the TOP called the Steiner Team Orienteering Problem (STOP). The aim of the problem remains the same as for the TOP with the difference that there is a specified subset of mandatory nodes that must be visited. The authors proposed a novel commodity-based formulation for the problem and solve it through a cutting-plane scheme.  

Considering the applications in the transportation area, \cite{Stavropoulou2019} formulated a new mathematical model called the Consistent Vehicle Routing Problem with Profits (CVRPP). The problem involves two sets of customers, the frequent customers that are considered mandatory and the non-frequent and potential ones with known and estimated profits, respectively. Both have known demands and service requirements over a planning horizon of multiple days. The objective is to determine the vehicle routes that maximise the profit while satisfying vehicle capacity, route duration and consistency constraints. To tackle with it, the authors developed an adaptive tabu-search using both short and long-term memory structures to guide the search process.

A new variant of the TOP has been proposed by \cite{Lin2017}. The authors investigated the TOP with time windows and mandatory visits (TOPTW-MV). In the TOPTW-MV, some customers are important customers that must be visited. The others are called optional customers. Each customer carries a positive profit. The goal is to determine a given number of paths that maximise the total profit collected at visited nodes, while observing some constraints such as mandatory visits and time window intervals. They formulated a mathematical linear model for TOPTW-MV and designed a multi-start simulated annealing metaheuristic to solve the problem.

\paragraph{The incompatibilities between nodes.}

To the best of our knowledge, the physical incompatibilities are never considered in literature before \cite{Aringhieri2021}. As reported in their work, it is possible to recognise them in the situation in which the ambulances must transport the most injured patients on the hospitals. For this reason, some connections are forbidden and therefore the graph is not complete. 
On the other hand, the logical incompatibilities can be used to separate several types of patients. For example, patients with different infectious diseases can not share the same ambulance.

The first work we found that considers the logical incompatibilities is \cite{Palomo2015}. The authors addressed a variant of the OP with mandatory visits and incompatibilities among nodes. They proposed a hybrid algorithm based on a reactive GRASP and a general VNS. Finally, they also validate the performance of the proposed algorithm on some instances taken from the literature of the traditional OP. 

\cite{Lu2018} proposed a further variant of the OP called the OP with Mandatory Visits and Exclusionary Constraints (OPMVEC). The aim of this problem is to visit a set of mandatory nodes and some optional nodes, while respecting the logical incompatibilities between nodes and the maximum total time budget constraint. The authors presented a highly effective memetic algorithm for the OPMVEC combining (i) a dedicated tabu-search procedure that considers both feasible and infeasible solutions by constraint relaxation, (ii) a backbone-based crossover, and (iii) a randomised mutation procedure to prevent from premature convergence. 

Another work that includes the notion of logical incompatibilities is \cite{Vu2021}. The authors enriched the TTDP with further constraints: (i) the addition of some mandatory locations, (ii) the categorisation of each POI, (iii) the restriction on the number of locations of each category that can be visited, and (iv) the order in which selected locations are visited which introduces the incompatibilities. They proposed a branch-and-check approach in which the reduced problem selects a subset of POIs, verifying all but time-related constraints. These locations define candidate solutions for the problem. For each of them, the sub-problem checks whether a feasible trip can be built using the given locations. 

\subsection{Some complexity results}
\label{ssec:complexity}

The proof that TOP-ST-MIN is NP-hard is quite straightforward and obtained by reducing the TOP to the TOP-ST-MIN. Every instance of the TOP can be mapped into an instance of the TOP-ST-MIN simply considering an empty set of mandatory nodes and incompatibilities and setting to zero all the service times. It follows that the TOP-ST-MIN results to be at least hard as the TOP. Finally, knowing that the TOP has been proven to be NP-hard by \cite{Butt1994}, the TOP-ST-MIN is also NP-hard.


In addition, we aim to demonstrate that the new features introduced in the TOP-ST-MIN make NP-complete the problem of finding a feasible solution in some instances, even for the single-path problem (OP-ST-MIN). 
The Feasibility Problem (FP) of the OP-ST-MIN consists in finding a single route from the source to the destination node which includes all mandatory nodes, does not violate any incompatibilities, and with a total route time less than or equal to a maximum threshold $T_{\max}$. We state the following:
\begin{theorem}
\label{ca}
Finding a feasible solution for the OP-ST-MIN is NP-complete.
\end{theorem}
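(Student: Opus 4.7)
The plan is a standard two-part argument. First, I would verify that the Feasibility Problem lies in NP by exhibiting a polynomial-size certificate. Given a candidate ordered sequence of nodes from the source to the destination, one can check in polynomial time that it uses only permitted arcs (no physical incompatibility is violated), that every mandatory node occurs, that no intermediate node is visited more than once, and that the cumulative travel plus service time does not exceed $T_{\max}$. Each test is linear in the size of the certificate, so the problem is in NP.

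For NP-hardness, I would reduce from the $s$--$t$ Hamiltonian Path Problem, which is well known to be NP-complete: given an undirected graph $G=(V,E)$ and two distinguished vertices $s, t \in V$, decide whether $G$ contains a Hamiltonian path with endpoints $s$ and $t$. I would construct an OP-ST-MIN instance on the node set $V$, with source $s$ and destination $t$, by declaring every node in $V \setminus \{s,t\}$ mandatory, setting all service times to zero, assigning a travel time of $1$ to every arc whose underlying pair lies in $E$, marking every arc whose underlying pair is not in $E$ as physically incompatible, and finally fixing $T_{\max}=|V|-1$. The construction is clearly polynomial in the size of $G$.

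The remaining step is to prove the equivalence of the two instances. Any $s$--$t$ Hamiltonian path in $G$ immediately yields a feasible OP-ST-MIN route of total duration exactly $|V|-1$ that contains all mandatory nodes and uses only admissible arcs. Conversely, a feasible OP-ST-MIN route starts at $s$, ends at $t$, avoids every incompatible arc (hence corresponds to a walk in $G$), and must contain each node of $V \setminus \{s,t\}$ at least once because mandatory nodes must appear and intermediate nodes cannot repeat. Since every admissible arc costs one unit of time and the budget equals $|V|-1$, the walk traverses at most $|V|-1$ arcs and therefore visits at most $|V|$ distinct vertices; combined with the mandatory-visit requirement, this forces exactly $|V|$ vertices, so the route is an $s$--$t$ Hamiltonian path in $G$.

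The main subtlety I expect is the backward direction of the equivalence: one must argue that feasibility forces a simple Hamiltonian path rather than a walk that exploits the freedom of revisiting the source or destination. This is ruled out by the tight budget $|V|-1$ together with the mandatory-visit requirement, since any revisit of $s$ or $t$ would consume an additional unit of the budget without contributing a new mandatory visit, and would therefore prevent one of the $|V|-2$ mandatory intermediate nodes from being reached within the budget. Once this point is nailed down, combining the NP membership with the reduction immediately yields NP-completeness of the Feasibility Problem of the OP-ST-MIN.
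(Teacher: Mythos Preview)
Your proposal is correct and follows the same high-level strategy as the paper (reduce the Hamiltonian Path Problem to the feasibility problem via mandatory nodes and physical incompatibilities), but the technical realisation differs in two notable ways. First, the paper reduces from the \emph{general} Hamiltonian Path Problem (no prescribed endpoints): it adjoins two fresh nodes $s$ and $t$, each adjacent to every vertex of $G$, and declares \emph{all} nodes mandatory; you instead reduce from the $s$--$t$ variant and reuse the given endpoints directly. Second, the paper sets every travel and service time to $0$ with $T_{\max}=0$, so the time budget is vacuous and the argument rests purely on mandatory visits plus incompatibilities; you use unit travel times with a tight budget $T_{\max}=|V|-1$, which makes the budget do real work in the backward direction. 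The paper's choice is a bit cleaner since it sidesteps entirely the subtlety you flag about possible revisits of $s$ or $t$, while your version has the merit of explicitly verifying membership in NP, a step the paper's proof omits even though it is formally required for an NP-completeness claim.
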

\begin{proof}
In order to prove the above theorem, we provide a reduction to the well-known NP-complete Hamiltonian Path Problem (HPP)~\parencite{Karp1972}. An HPP instance consists of an undirected graph $G=(V,E)$ to which the following question is associated: is there a path in $G$ which contains all the nodes of $V$? Starting from a generic instance $\psi$ of HPP, we construct an instance $\phi$ of OP-ST-MIN as follows:
\begin{itemize}
    \item introduce a graph $G'(V',E')=G$ and add two nodes $s$ (source) and $t$ (destination) to $V'$  with an edge between $s$ (respectively $t$) and any node of $V$;
    \item to any node of $V'$, associate a service time of 0 and to each edge of $E'$ associate a travel time of 0. The maximum time available is set at $T_{\max}=0$;
    \item consider all nodes in $V'$ as mandatory nodes, so that the tour of the single team must contain all nodes;
    \item for each edge not present in $G$, we suppose a physical incompatibility between the two nodes. No logical incompatibility is present.
\end{itemize}
Suppose first that a Hamiltonian path $p$ exists in $G$, i.e., $\psi$ is a \emph{Yes} instance. We can build an OP-ST-MIN tour by connecting $s$ to the first node in $p$ and the last node in $p$ to $t$. $\phi$ is therefore a \emph{Yes} instance.

Conversely, suppose that $\phi$ is a \emph{Yes} instance, i.e., there exists a tour from $s$ to $t$. Removing nodes $s$ and $t$ from the tour, we obtain a path of $G$ containing all the nodes of $V$, which is therefore a Hamiltonian path for $G$ and $\psi$ is a \emph{Yes} instance, which completes the proof.
\end{proof}
We observe that the presence of mandatory nodes and physical incompatibilities makes the FP NP-complete. 
On the contrary, in a generic instance of the TOP (i.e., the graph is complete and no mandatory nodes 
are present), it is always possible to perform a tour directly from $s$ to $t$ without performing any 
additional stop. This proves that the TOP-ST-MIN is harder to tackle with respect to the TOP.


\section{The mathematical formulations}
\label{sec:models}

We present two different mathematical formulations for the TOP-ST-MIN, namely the mixed 
and the compact formulation. After introducing some notation in Section~\ref{ssec:notation}, we describe the
two alternative formulations proposed for the TOP-ST-MIN in Section~\ref{ssec:MTOP-ST-MIN} and~\ref{ssec:CTOP-ST-MIN}
showing first a model for the TOP-ST-MIN which considers only the physical incompatibilities 
(denoted by \topP{}) and then its
extension to include also the logical ones (denoted by \topPL{}) as stated in the introduction.

\subsection{Notation}
\label{ssec:notation}

We model the TOP-ST-MIN on a directed graph $G=(N,A)$ where $N$ is the set of nodes denoted from $1$ (the 
source or depot) to $n$ (the destination), and $A$ is the set of directed arcs across the nodes in $N$.
To ease the readability of the mathematical formulations, we introduce the set $\hat{N} = \{2, \ldots, n - 1\}$ to represent the set of \emph{customers} and the set $\hat{A} = \{(i,j) \in A : i \in \hat{N} \cup \{1\}, \: j \in \hat{N} \cup \{n\}, \: i \neq j \}$ to represent the set of \emph{traversable} arcs.
To each node $k \in \hat{N}$ is associated a non-negative amount of service time $s_k$ and a profit $p_k$.
Each arc has a non-negative travel time $t_{ij}$ across the node $i$ and $j$.
Further, we introduce the following three sets: let $M \subset \hat{N}$, $I \subset \hat{A}$, 
$C \subset \hat{N} \times \hat{N}$ be the set of 
mandatory nodes, the set of physical incompatibilities, and the set of logical incompatibilities, respectively.

The aim of the TOP-ST-MIN consists in finding a set of $m$ routes from the source node to the destination one maximising the overall 
profit collected in such a way that the total time (service and travel) of each route does not exceed the maximum allowed 
time $T_{\max}$, each mandatory node should be visited, and each route should not violate the incompatibilities considered.


\subsection{The mixed formulation}
\label{ssec:MTOP-ST-MIN}

Combining the vehicle-based formulation for the TOP \parencite{Vansteenwegen2011} and the flow-based formulation
for the TOP \parencite{bianchessi2018}, the mixed formulation for the \topP{} is the following:

\allowdisplaybreaks{
\begin{subequations}
\begin{align}
\max \quad & {\sum_{r=1}^m \sum_{k \in \hat{N}} p_k \: y_{kr}} \label{eq:MTOP-ST-MINof}\\
\textrm{s.t.}
\quad & \sum_{r=1}^{m} \sum_{(1,j) \in \hat{A}} x_{1jr} = \sum_{r=1}^{m} \sum_{(i,n) \in \hat{A}} x_{inr} = m, \label{eq:MTOP-ST-MINstartend}\\
\quad & \sum_{(i,k) \in \hat{A}} x_{ikr} = \sum_{(k,j) \in \hat{A}} x_{kjr} = y_{kr}, \quad \forall \: k \in \hat{N}, \quad r = 1,\ldots, m, \label{eq:MTOP-ST-MINconnectivity}\\
\quad & \sum_{r=1}^{m} y_{kr} \leq 1, \quad \forall \: k \in \hat{N}, \label{eq:MTOP-ST-MINonetour}\\
\quad & \sum_{r=1}^{m} y_{kr} = 1, \quad \forall \: k \in M, \label{eq:MTOP-ST-MINmandatory}\\
\quad & \sum_{r=1}^{m} x_{ijr} = 0, \quad \forall \: (i,j) \in I, \label{eq:MTOP-ST-MINIncompatibilities}\\
\quad & \sum_{(k,j) \in \hat{A}} z_{kjr} - \sum_{(i,k) \in \hat{A}} z_{ikr} = \sum_{(k,j) \in \hat{A}} (t_{kj} + s_k) \: x_{kjr}, \quad \forall \: k \in \hat{N}, \quad r = 1,\ldots, m, \label{eq:MTOP-ST-MINFlow}\\ 
\quad & z_{ijr} \leq (T_{\max} - s_j - t_{jn}) \: x_{ijr}, \quad \forall \: (i,j) \in \hat{A}, \quad r = 1,\ldots, m, \label{eq:MTOP-ST-MINUB}\\
\quad & z_{ijr} \geq (t_{1i} + s_i + t_{ij}) \: x_{ijr}, \quad \forall \: (i,j) \in \hat{A}, \quad r = 1,\ldots, m, \label{eq:MTOP-ST-MINLB}\\
\quad & z_{1kr} = t_{1k} \: x_{1kr} , \quad \forall \: k \in \hat{N}, \quad r = 1,\ldots, m, \label{eq:MTOP-ST-MINDepot}\\
\quad & 0 \leq x_{1nr} \leq m, \quad \forall \: r = 1,\ldots, m,\label{eq:MTOP-ST-MINDEF1}\\
\quad & 0 \leq x_{ijr} \leq 1, \quad \forall \: (i,j) \in \hat{A}, \quad r = 1,\ldots, m, \label{eq:MTOP-ST-MINDEF2}\\
\quad & x_{ijr} \in \mathbb{N^+}, \quad \forall \: (i,j) \in \hat{A}, \quad r = 1,\ldots, m,\label{eq:MTOP-ST-MINDEF3}\\
\quad & z_{ijr} \in \mathbb{R^+}, \quad \forall \: (i,j) \in \hat{A}, \quad r = 1,\ldots, m,\label{eq:MTOP-ST-MINDEF4}\\
\quad & y_{kr} \in \{0, 1\}, \quad \forall \: k \in \hat{N}, \quad r = 1,\ldots, m \label{eq:MTOP-ST-MINDEF5}
\end{align}
\label{mod:MTOP-ST-MIN}
\end{subequations}
}

The proposed model makes use of the following decision variables:
the variable $x_{ijr}$ is equal to $1$ if and only if the arc $(i,j)$ is traversed by the route $r$, $0$ otherwise;
the variable $y_{kr}$ is equal to $1$ if and only if the node $k$ is visited by the route $r$, $0$ otherwise;
the variable $z_{ijr}$ defines the arrival time at node $j$ coming from the node $i$ and can be thought as the 
amount of flow that passes through the arc $(i,j)$ inside the route $r$.

The objective function \eqref{eq:MTOP-ST-MINof} maximises the overall profit collected by each route.
Constraint~\eqref{eq:MTOP-ST-MINstartend} guarantees (with the optimisation sense of the 
objective function) that each route starts from the source node and ends to the destination one.
Constraints~\eqref{eq:MTOP-ST-MINconnectivity} guarantee the connectivity of each route while
the constraints~\eqref{eq:MTOP-ST-MINonetour} guarantee that a node can only be visited by at most one route.
Constraints~\eqref{eq:MTOP-ST-MINmandatory} ensure that all the mandatory customers $M$ will be visited.
Constraints~\eqref{eq:MTOP-ST-MINIncompatibilities} guarantee that all the physical incompatibilities 
are satisfied. 
Constraints~\eqref{eq:MTOP-ST-MINFlow} and~\eqref{eq:MTOP-ST-MINUB} are the classical Gavish-Graves (GG) 
subtours elimination constraints adapted to the TOP \parencite{Gavish2004}. 
Constraints~\eqref{eq:MTOP-ST-MINUB} and~\eqref{eq:MTOP-ST-MINLB} set the upper (lower) bound on the duration of the route $r$ while the 
constraints~\eqref{eq:MTOP-ST-MINDepot} bound the flow originating from the initial depot by the same route.
Constraints~\eqref{eq:MTOP-ST-MINDEF1}, \eqref{eq:MTOP-ST-MINDEF2}, \eqref{eq:MTOP-ST-MINDEF3}, 
\eqref{eq:MTOP-ST-MINDEF4} and \eqref{eq:MTOP-ST-MINDEF5} are the variable definition constraints.

In order to include the logical incompatibilities to model the \topPL{}, we can
add the constraints 
\begin{equation}
     |C_k| (1 - y_{kr}) \geq \sum_{i \in C_k} y_{ir}, \quad \forall \: k \in \hat{N}, 
     \quad \forall \: r = 1,\ldots, m \: ,
     \label{mod:MTOP-ST-MIN-logic-incomp}
\end{equation}
in the formulation~\eqref{mod:MTOP-ST-MIN}
where the set $C_k$ represents the subset of nodes that are logical incompatible with $k$. 
The constraints~\eqref{mod:MTOP-ST-MIN-logic-incomp} impose that only one set of nodes between $\{k\}$ 
and $C_k$ can be visited by the route $r$.

\subsection{The compact formulation}
\label{ssec:CTOP-ST-MIN}

Starting from the TOP flow-based formulation  \parencite{bianchessi2018}, the compact formulation for the \topP{} is the following:

\allowdisplaybreaks{
\begin{subequations}
\begin{align}
\max \quad & {\sum_{k \in \hat{N}} p_k \: y_k}\label{eq:CTOP-ST-MINof}\\
\textrm{s.t.}
\quad & \sum_{(i,j) \in \hat{A}} x_{1j} = \sum_{(i,n) 
\in \hat{A}} x_{in} = m, \label{eq:CTOP-ST-MINstartend}\\
\quad & \sum_{(i,k) \in \hat{A}} x_{ik} = \sum_{(k,j) 
\in \hat{A}} x_{kj} = y_{k}, \quad \forall \: k \in \hat{N}, \label{eq:CTOP-ST-MINconnectivity}\\
\quad & \sum_{(k,j) \in \hat{A}} z_{kj} - \sum_{(i.k) \in \hat{A}} z_{ik} = \sum_{(k,j) \in \hat{A}} (t_{kj} + s_k) \: x_{kj}, \label{eq:CTOP-ST-MINFlow} \quad \forall \: k \in \hat{N},\\ 
\quad & z_{ij} \leq (T_{\max} - s_j - t_{jN}) \: x_{ij}, \quad \forall \: (i,j) \in \hat{A}, \label{eq:CTOP-ST-MINUB}\\
\quad & z_{ij} \geq (t_{1i} + s_i + t_{ij}) \: x_{ij}, \quad \forall \: (i,j) \in \hat{A}, \label{eq:CTOP-ST-MINLB}\\
\quad & z_{1k} = t_{1k} \: x_{1k}, \quad \forall \: k \in \hat{N}, \label{eq:CTOP-ST-MINDepot}\\
\quad & y_{k} = 1, \quad \forall \: k \in M, \label{eq:CTOP-ST-MINmandatory}\\
\quad & x_{ij} = 0, \quad \forall \: (i,j) \in I, \label{eq:CTOP-ST-MINIncompatibilities}\\
\quad & 0 \leq x_{1N} \leq m,\label{eq:CTOP-ST-MINDEF1}\\
\quad & 0 \leq x_{ij} \leq 1, \quad \forall \: (i,j) \in \hat{A}, \label{eq:CTOP-ST-MINDEF2}\\
\quad & x_{ij} \in \mathbb{N^+}, \quad \forall \: (i,j) \in \hat{A},\label{eq:CTOP-ST-MINDEF3}\\
\quad & z_{ij} \in \mathbb{R^+}, \quad \forall \: (i,j) \in \hat{A},\label{eq:CTOP-ST-MINDEF4}\\
\quad & y_{k} \in \{0, 1\}, \quad \forall \: k \in \hat{N}\label{eq:CTOP-ST-MINDEF5}
\end{align}
\label{mod:CTOP-ST-MIN}
\end{subequations}
}

The proposed model makes use of the following decision variables:
the variable $x_{ij}$ is equal to $1$ if and only if the arc $(i,j)$ is traversed, $0$ otherwise;
the variable $y_{k}$ is equal to $1$ if and only if the node $k$ is visited, $0$ otherwise;
the variable $z_{ij}$ defines the arrival time at node $j$ coming from the node $i$ and can be 
thought as the amount of flow that passes through the arc $(i,j)$.

The objective function \eqref{eq:CTOP-ST-MINof} maximises the overall profit collected by each route. 
Constraint~\eqref{eq:CTOP-ST-MINstartend} guarantees (together with the sense of the optimisation of the 
objective function) that each route starts from the source node and ends to the destination one.
Constraints~\eqref{eq:CTOP-ST-MINconnectivity} guarantee the connectivity of each route.
Constraints~\eqref{eq:CTOP-ST-MINFlow} and~\eqref{eq:CTOP-ST-MINUB} are the GG subtours elimination 
constraints adapted for TOP. 
Constraints~\eqref{eq:CTOP-ST-MINUB} and~\eqref{eq:CTOP-ST-MINLB} set the upper (lower) bound on the duration of each route.
Constraints~\eqref{eq:CTOP-ST-MINDepot} bound the flow originating from the initial depot.
Constraints~\eqref{eq:CTOP-ST-MINmandatory} ensure that all the mandatory customers $M$ are visited.
Constraints~\eqref{eq:CTOP-ST-MINIncompatibilities} guarantee that all the physical incompatibilities 
are satisfied.
Constraints~\eqref{eq:CTOP-ST-MINDEF1}, \eqref{eq:CTOP-ST-MINDEF2}, \eqref{eq:CTOP-ST-MINDEF3}, 
\eqref{eq:CTOP-ST-MINDEF4} and~\eqref{eq:CTOP-ST-MINDEF5} are variable definition constraints.

In order to include the logical incompatibilities to model the \topPL{}, we can
add the following constraints in the formulation~\eqref{mod:CTOP-ST-MIN}:
\allowdisplaybreaks{
\begin{subequations}
\begin{align}
\quad & v_k \geq j \cdot x_{1k}, \quad \forall \: k \in \hat{N}, \label{eq:CTOP-ST-MINFirstRouteIndex1}\\
\quad & v_k \leq j \cdot x_{1k} - (n - 2) \: (x_{1k} - 1), \quad \forall \: k \in \hat{N}, \label{eq:CTOP-ST-MINFirstRouteIndex2}\\
\quad & v_j \geq v_i + (n - 2) \: (x_{ij} - 1), \quad \forall \: (i,j) \in \hat{A}, \label{eq:CTOP-ST-MINForwardRouteIndex1}\\
\quad & v_j \leq v_i + (n - 2) \: (1 - x_{ij}), \quad \forall \: (i,j) \in \hat{A}, \label{eq:CTOP-ST-MINForwardRouteIndex2}\\
\quad & v_i \geq v_j + 1 - (n - 2) \: (1 - u_{ij}), \quad \forall \: i,j \in C, \, i\neq j \label{eq:CTOP-ST-MINConflicts1}\\
\quad & v_i \leq v_j - 1 + (n - 2) \: u_{ij}, \quad \forall \: i,j \in C, \, i\neq j \label{eq:CTOP-ST-MINConflicts2}\\
\quad & u_{ij} \in \{0, 1\}, \quad \forall \: i,j \in C, \, i\neq j \label{eq:CTOP-ST-MINDEF6}\\
\quad & v_k \in \mathbb{R^+}, \quad \forall \: k \in \hat{N} \, . \label{eq:CTOP-ST-MINDEF7}
\end{align}
\label{mod:CTOP-ST-MIN-logic-incomp}
\end{subequations}
}

The variable $v_k$ \parencite{Furtado2017} represents the index of the first node of the route that
visits node $k$ (the route identifier of the node $k$).
The variable $u_{ij}$ is equal to $1$ if and only if $v_i - v_j \geq 1$, $0$ otherwise.
Constraints~\eqref{eq:CTOP-ST-MINFirstRouteIndex1} and~\eqref{eq:CTOP-ST-MINFirstRouteIndex2} guarantee 
that the route identifier for the node $k$ assumes the same index of the first visited node of the route.
Constraints~\eqref{eq:CTOP-ST-MINForwardRouteIndex1} and~\eqref{eq:CTOP-ST-MINForwardRouteIndex2} guarantee 
that the index of the first visited node is forwarded to the next nodes in the route.
Constraints~\eqref{eq:CTOP-ST-MINConflicts1} and~\eqref{eq:CTOP-ST-MINConflicts2} guarantee that all the 
logical incompatibilities are satisfied: actually, for each pair of logical incompatible nodes $i$ and $j$ in $C$, we 
impose that the route identifier of $i$ must be different from the route identifier of $j$. 

\bigskip

Table~\ref{tab:notation} summarises the notation introduced for the 
formulations~\eqref{mod:MTOP-ST-MIN} and~\eqref{mod:CTOP-ST-MIN} and their extension to include 
the logical incompatibilities~\eqref{mod:MTOP-ST-MIN-logic-incomp} 
and~\eqref{mod:CTOP-ST-MIN-logic-incomp}, respectively.

\begin{table}[!ht]
\centering
\resizebox{0.95\textwidth}{!}{%
\begin{tabular}{llp{24pt}ll}
\toprule
\multicolumn{2}{c}{\textbf{Sets}}   && \multicolumn{2}{c}
{\textbf{Parameters}}\\
\cmidrule{1-2} \cmidrule{4-5}
$N$ & nodes                         && $n$ & number of nodes \\
$\hat{N}$ & customers               && $m$ & number of routes \\
$A$ & arcs                          && $p_k$ & profit associated to the node $k$ \\
$\hat{A}$ & traversable arcs        && $t_{ij}$ & travelling time between the nodes $i$ and $j$ \\ 
$M$ & mandatory nodes               && $s_k$ & service time of the node $k$ \\
$I$ & physical incompatibilities    && $T_{\max}$ & time limit for each route \\
$C$ & logical incompatibilities     \vspace*{3pt}\\
\cmidrule{1-2} \cmidrule{4-5}
\multicolumn{2}{c}{\textbf{Decision variables (Mixed formulation)}} &&
\multicolumn{2}{c}{\textbf{Decision variables (Compact formulation)}} \\
\cmidrule{1-2} \cmidrule{4-5}
$x_{ijr}$ & $1$ if the arc $(i, j)$ is traversed by the route $r$ &&
    $x_{ij}$ & $1$ if the arc $(i, j)$ is traversed \\
$y_{kr}$ & $1$ if the node $k$ is visited by the route $r$ &&
    $y_{k}$ & $1$ if the node $k$ is visited \\
$z_{ijr}$ & flow traversing the arc $(i,j)$ by the route $r$ &&
    $z_{ij}$ & flow traversing the arc $(i,j)$ \\
&&& $v_{k}$ & route identifier of the node $k$ \\
&&& $u_{ij}$ & $1$ if $v_i - v_j \geq 1$ \vspace*{3pt}\\
\bottomrule
\end{tabular}%
}
\caption{Summary of the notation.}
\label{tab:notation}
\end{table}

\section{An exact method}
\label{sec:exact-method}

We developed an exact method based on the cutting-plane methodology that is able to separate five different 
families of valid inequalities to strengthen the model.
%
Our cutting-plane exact algorithm is based on the compact formulation~\eqref{mod:CTOP-ST-MIN}, 
which almost always dominates the mixed formulation~\eqref{mod:MTOP-ST-MIN} from a 
computational point of view, as we shall show in Section~\ref{fc}.

%

\subsection{The valid inequalities}
\label{valid-inequalities}

We introduce the directed graph $\overline{G} = (N, \overline{A})$ with $\overline{A} = \{(i, j) : \overline{x}_{ij} > 0\}$ 
for every optimal relaxed solution ($\overline{x}$, $\overline{y}$, $\overline{z}$) of~\eqref{mod:CTOP-ST-MIN}. Such a graph
makes it easier to describe the valid inequalities. Until the end of this section, we refer to the quantity $\overline{x}_{ij}$ as the amount of flow passing through the arc ($i$, $j$) and to the quantity $\overline{y}_i$ as the amount of flow passing through the node $i$ in the graph $\overline{G}$. 

\subsubsection{Route inequalities}
\label{iri}

A relaxed solution of the compact formulation~\eqref{mod:CTOP-ST-MIN} allows the presence of 
infeasible routes whose travel time exceeds $T_{\max}$. The Route Inequalities (RIs) aims 
to cut off such routes from the search space:
\begin{equation}
    \label{routeIneq}
    \sum_{i=1}^{l - 1} x_{r_ir_{i+1}} \leq \sum_{k=2}^{l - 1} y_{r_k},
\end{equation}
where $r_i$ represents the $i^{th}$ node visited by $r$ and $l$ the cardinality of the latter. 

\subsubsection{Set inequalities}
\label{isi}

The Set Inequalities (SIs) could be considered a strong version of the RIs since they do not take into account the order of the nodes composing a route. 

Thus, given an infeasible route $r$, we build a directed sub-graph $G(r) = (N(r), E(r))$ in which $N(r)$ represents the set of 
nodes composing the route $r$, and $E(r)$ the set of arcs across the nodes in $N(r)$. 
In order to decide if exists a feasible route that visits all the nodes in $N(r)$, we calculate the Helsgaun bound 
\parencite{Helsgaun2000} on $G(r)$ to find a lower bound 
$H_{LB}$ for the travel time of a possible route that visits all the nodes in $N(r)$. 
The Helsgaun bound, as well as the Held-Karp bound \parencite{Held1970}, relies on the Lagrangian relaxation of the 
classical TSP formulation. By consequence, we compute such a bound using the subgradient method setting a simple 
decreasing step size and modifying the Lagrangian multipliers in accordance with \cite{Volgenant1982}.
Note that we decide to adopt the Helsgaun bound instead of the Held-Karp bound since the former is computationally 
less expensive than the latter even though it produces solutions of the same quality.

Finally, if the value of the lower bound $H_{LB}$ is greater than $T_{\max}$, there is no feasible route that visits 
all nodes belonging to $\hat{N}(r) = N(r) \setminus \{1,n \}$, that is the set of customers visited by route $r$. 
In this case, we can consider this inequality:
\begin{equation}
    \label{setIneq}
    \sum_{i \in \hat{N}(r)} \sum_{j \in \hat{N}(r)} x_{ij} \leq |\hat{N}(r)| - 2 \, .
\end{equation}
This cut imposes that the set of nodes in $\hat{N}(r)$ must be visited by, at least, 
two routes. 

\subsubsection{Subpath inequalities}
\label{si}

We adapted the path inequalities \parencite{Fischetti1998} to exploit feasible subpaths in such a way to cut off 
infeasible routes.
Taking a feasible subpath $p$ of customers of cardinality $l$ belonging to an infeasible route 
$r$, we can consider a pair of Sub-Path Inequalities (SPIs):
%
%
\begin{align} 
 \sum_{i=1}^{l - 1} x_{p_ip_{i+1}} - \sum_{k=2}^{l - 1} y_{p_k} - \sum_{v \in L(p)} x_{vp_1} \leq 0 \quad \text{ and } \quad
\sum_{i=1}^{l - 1} x_{p_ip_{i+1}} - \sum_{k=2}^{l - 1} y_{p_k} - \sum_{v \in R(p)} x_{p_{l}v} \leq 0
\label{subpathIneq1}
\end{align}
$L(p)$ and $R(p)$ represent the set of nodes that is possible to add at the beginning 
and at the end of the subpath $p$ without making itself infeasible, respectively. 
Finally, $p_i$ describes the $i^{th}$ node visited by $p$. 

These cuts imposes that the subpath $p$ should be left-connected (right-connected) with 
a node belonging to $L(p)$ ($R(p)$). 
Their validity derives directly from the validity of the path inequalities.

\subsubsection{Subtour elimination constraints}
\label{secs_}

Although the mathematical formulation presented in Section~\ref{mod:CTOP-ST-MIN} is 
sufficient to prevent the generation of subtours and to ensure the connectivity of 
each route, we decided to add a different type of Subtour Elimination Constraints 
(SECs) to strengthen it:
\begin{equation}
\label{eq:secs}
\sum_{i \in U} \sum_{j \in U} x_{ij} \leq \sum_{i \in U} y_i - y_k, \quad \forall \: U \subseteq \{2, \ldots, n-1\}, \; k \in U.
\end{equation}
They are exponential in the number of nodes thus we decided to dynamically separate them along 
the Branch \& Bound tree. Being an adaptation of the classical Dantzig-Fulkerson-Johnson SECs 
(designed for the TSP), the validity of \eqref{eq:secs} follows from \cite{dfj1954}. 
We formulate the following lemma:

\begin{lemma}
\label{lemma}
Any violated SEC \eqref{eq:secs} can only be identified in those sets associated with cycles in the graph 
$\overline{G}$. A subset of nodes $U$ induces a graph in $\overline{G}$ with cycles if and only if the 
arcs involved with the nodes contained in $U$ form one or more cycles in $\overline{G}$.
\end{lemma}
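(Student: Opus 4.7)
The plan is to rewrite inequality~\eqref{eq:secs} in terms of the boundary flow of $U$ in $\overline{G}$ and then exploit the acyclic structure via a flow decomposition. First, I would use the flow conservation constraints~\eqref{eq:CTOP-ST-MINconnectivity}: summing the in-flow equality over $k \in U$ and splitting each incoming arc according to whether its tail lies in $U$ or not yields
\begin{equation*}
    \sum_{i,j \in U} \overline{x}_{ij} \;=\; \sum_{k \in U} \overline{y}_k - F^-(U),
\end{equation*}
where $F^-(U) = \sum_{k \in U}\sum_{(i,k)\in\hat{A},\,i \notin U} \overline{x}_{ik}$ is the total flow entering $U$ from $N\setminus U$. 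Substituting into~\eqref{eq:secs}, the SEC for the pair $(U,k^*)$ reduces to the boundary condition $\overline{y}_{k^*} \le F^-(U)$, so a violation is equivalent to $\overline{y}_{k^*} > F^-(U)$.

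Next, I would establish the contrapositive: if the subgraph of $\overline{G}$ induced by $U$ is acyclic, then $\overline{y}_{k^*} \le F^-(U)$ for every $k^* \in U$. Let $S \subseteq U$ be the set of nodes that possess a directed path to $k^*$ in this induced subgraph (with $k^* \in S$ by convention). Acyclicity forces every arc of $\overline{A}$ entering a node of $S$ to originate either in $S$ or in $N \setminus U$: otherwise its tail would also reach $k^*$, contradicting its membership in $U \setminus S$. Summing flow conservation over $k \in S$ then gives
\begin{equation*}
    \sum_{k \in S} f^-(k) \;=\; \sum_{k \in S} \overline{y}_k - \sum_{i,j \in S} \overline{x}_{ij} \;\le\; F^-(U),
\end{equation*}
where $f^-(k) = \sum_{(i,k)\in\hat{A},\,i \in N\setminus U} \overline{x}_{ik}$.

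The crucial step is a path-flow decomposition on the DAG induced by $S$. I would augment it with a virtual source $\sigma$ and sink $\tau$: for each $k \in S$, add an arc $\sigma \to k$ carrying $f^-(k) \ge 0$ and an arc $k \to \tau$ carrying the residual $\overline{y}_k - \mathrm{out}(k,S) \ge 0$, where $\mathrm{out}(k,S) = \sum_{j \in S,(k,j)\in\hat{A}} \overline{x}_{kj}$ and non-negativity follows from $\mathrm{out}(k,S) \le \overline{y}_k$. Conservation then holds at every $k \in S$, the total $\sigma$--$\tau$ flow equals $\sum_{k \in S} f^-(k)$, and this flow admits a decomposition into non-negative $\sigma$--$\tau$ paths in the acyclic augmented network. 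Since the flow through $k^*$ in this network equals $\overline{y}_{k^*}$ and is the sum of the weights of the decomposition paths visiting $k^*$, we conclude $\overline{y}_{k^*} \le \sum_{k\in S} f^-(k) \le F^-(U)$, which rules out a violation. The second sentence of the lemma is then a tautology: by definition, cycles in the induced subgraph of $\overline{G}$ on $U$ use exactly the arcs of $\overline{A}$ whose endpoints both lie in $U$.

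I expect the main obstacle to be keeping the flow-decomposition step rigorous while remaining concise: one has to verify the non-negativity of the residual capacities, check that conservation is preserved at $k^*$ after augmentation (which uses precisely the fact that no arc from $U \setminus S$ enters $S$), and invoke the standard result that a feasible non-negative flow on a DAG decomposes into path flows, so that the flow through any intermediate node is bounded by the total source-to-sink value.
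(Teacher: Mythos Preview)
Your proposal is correct and follows essentially the same route as the paper: both reduce the SEC~\eqref{eq:secs} via the flow-conservation constraints~\eqref{eq:CTOP-ST-MINconnectivity} to the boundary inequality $\overline{y}_{k}\le\theta$ (your $F^-(U)$), and then argue that acyclicity of the induced subgraph forces this inequality. The only difference is one of rigour: the paper dispatches the key step in a single sentence (``by simple flow conservation $\theta\ge\overline{y}_k$''), whereas you supply an explicit proof of that step by restricting to the ancestor set $S$ of $k^{*}$ and applying a path-flow decomposition on the resulting DAG, which is a welcome expansion of what the paper leaves implicit.
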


\begin{proof}
To prove it, consider a subset of nodes $U$ associated with no cycles in the graph $\overline{G}$. Now we can consider $\overline G_U$ as the subgraph of $\overline{G}$ induced by the set $U$. 
Until the end of this proof, we refer to $\overline{x}_{ij}$ and $\overline{y}_k$ as the flow passing through the arc ($i, j$) and through the node $k$, respectively.
Thanks to the constraints~\eqref{eq:CTOP-ST-MINconnectivity}, we can observe that for each $k\in U$, the corresponding value $\overline{y}_k$ is at least large as the sum of the flow on the incoming arcs on the node $k$: \[ \sum_{i \in U} \sum_{j \in U} \overline{x}_{ij} \leq \sum_{i \in U} \overline{y}_i .\] 
We define $\theta$ as the flow corresponding to all arcs $(i,j)$ with $i\notin U$ and $j\in U$: \[\theta = \sum_{i \notin U} \sum_{j \in U} \overline{x}_{ij} .\]
Hence, to balance the previous inequality, we need to sum up $\theta$ to the left part of it: \[ \sum_{i \in U} \sum_{j \in U} \overline{x}_{ij} + \theta = \sum_{i \in U} \overline{y}_i .\]
Since $\overline{G}_U$ does not contain any cycle, we can deduce by simple flow conservation that $\theta \geq \overline{y}_k$ for each $k \in U$. Finally, we can conclude that:
\[ \sum_{i \in U} \sum_{j \in U} \overline{x}_{ij} = \sum_{i \in U} \overline{y}_i - \theta \leq \sum_{i \in U} \overline{y}_i - \overline{y}_k \quad k \in U, \]
which completes the proof.
\end{proof}

\begin{figure}[!ht]
\centering
\begin{subfigure}{.40\textwidth}
  \centering
  \begin{tikzpicture}[>=latex, semithick]

\node [circle, draw, semithick, minimum size=0.6cm, label={[label distance=0.05cm]90:$0.9$}] (a) at (3,1) {$a$};
\node [circle, draw, semithick, minimum size=0.6cm, label={[label distance=0.05cm]0:$0.8$}] (b) at (1.5,3) {$b$};
\node [circle, draw, semithick, minimum size=0.6cm, label={[label distance=0.05cm]-90:$0.9$}] (c) at (3,5) {$c$};
\node [circle, draw=none, semithick, minimum size=0.5cm] (f1) at (3,-1) {};
\node [circle, draw=none, semithick, minimum size=0.5cm] (f2) at (-0.5,3) {};
\node [circle, draw=none, semithick, minimum size=0.5cm] (f3) at (3,7) {};
\node [ellipse, draw, dashed, semithick, minimum width=4.5cm, minimum height=5.8cm] (big) at (3,3) {};
\node [rectangle, draw, white, semithick, fill=lightgray, minimum width=0.8cm, minimum height=0.5cm] (fill) at (4.7,3) {};

\draw [->, thick](a) to [bend left] node[left=0.1cm] {0.6} (b);
\draw [->, thick](b) to [bend left] node[left=0.1cm] {0.7} (c);
\draw [->, thick](a) to [bend right=60] node[right=0.1cm] {0.2} (c);
\draw [->, thick](a) to [bend left] node[left=0.1cm] {0.6} (b);
\draw [->, thick](a) to [bend right] node[left=0.1cm] {0.1} (f1);
\draw [->, thick](f1) to [bend right] node[right=0.1cm] {0.9} (a);
\draw [->, thick](b) to [bend right] node[above=0.1cm] {0.1} (f2);
\draw [->, thick](f2) to [bend right] node[below=0.1cm] {0.2} (b);
\draw [->, thick](c) to [bend right] node[right=0.1cm] {0.9} (f3);

\end{tikzpicture}
  \caption{No violated SECs}
  \label{noVCCs}
\end{subfigure}%
\hspace*{.10\textwidth}
\begin{subfigure}{.40\textwidth}
  \centering
  \begin{tikzpicture}[>=latex, semithick]

\node [circle, draw, semithick, minimum size=0.6cm, label={[label distance=0.05cm]90:$0.9$}] (a) at (3,1) {$a$};
\node [circle, draw, semithick, minimum size=0.6cm, label={[label distance=0.05cm]0:$0.8$}] (b) at (1.5,3) {$b$};
\node [circle, draw, semithick, minimum size=0.6cm, label={[label distance=0.05cm]-90:$0.9$}] (c) at (3,5) {$c$};
\node [circle, draw=none, semithick, minimum size=0.5cm] (f1) at (3,-1) {};
\node [circle, draw=none, semithick, minimum size=0.5cm] (f2) at (-0.5,3) {};
\node [circle, draw=none, semithick, minimum size=0.5cm] (f3) at (3,7) {};
\node [ellipse, draw, dashed, semithick, minimum width=4.5cm, minimum height=5.8cm] (big) at (3,3) {};
\node [rectangle, draw, white, semithick, fill=lightgray, minimum width=0.8cm, minimum height=0.5cm] (fill) at (4.7,3) {};

\draw [->, thick](a) to [bend left] node[left=0.1cm] {0.6} (b);
\draw [->, thick](b) to [bend left] node[left=0.1cm] {0.7} (c);
\draw [->, thick](c) to [bend left=60] node[right=0.1cm] {0.9} (a);
\draw [->, thick](a) to [bend left] node[left=0.1cm] {0.6} (b);
\draw [->, thick](a) to [bend right] node[left=0.1cm] {0.3} (f1);
\draw [->, thick](b) to [bend right] node[above=0.1cm] {0.1} (f2);
\draw [->, thick](f2) to [bend right] node[below=0.1cm] {0.2} (b);
\draw [->, thick](f3) to [bend left] node[right=0.1cm] {0.2} (c);

\end{tikzpicture}
  \caption{Some violated SECs}
  \label{VCCs}
\end{subfigure}
\caption{Illustration of the lemma~\ref{lemma}.}
\label{fig:lemma}
\end{figure}
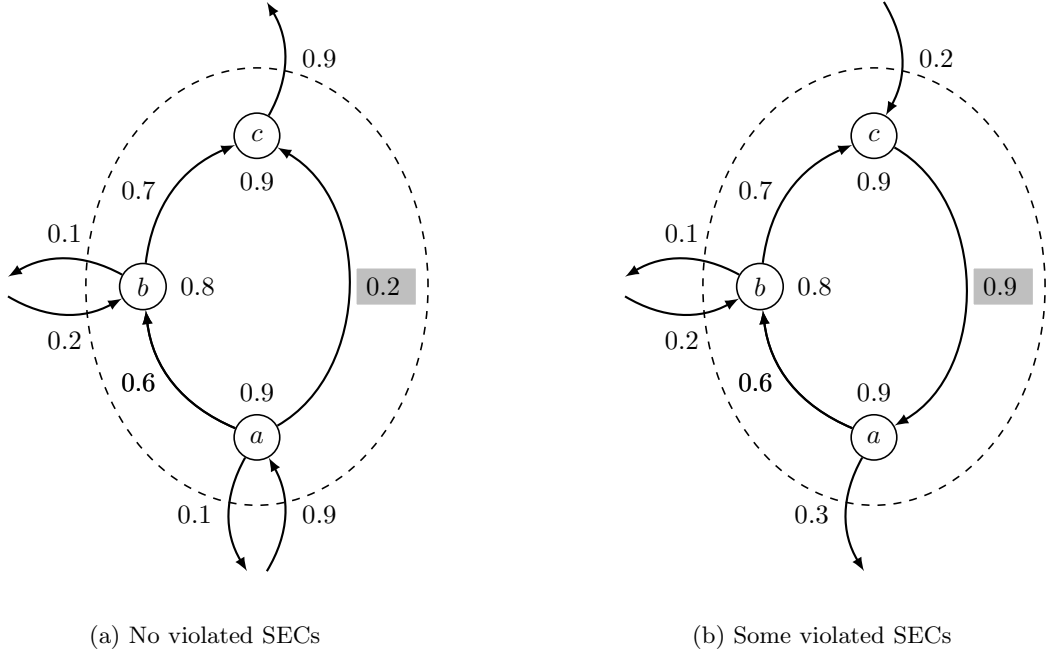

Figure~\ref{fig:lemma} gives a graphic intuition of the Property~\ref{lemma}. 
Figures~\ref{noVCCs} and~\ref{VCCs} depict a portion of the graph $\overline{G}$ that only considers the nodes in $U$, which is
composed of the three nodes $a, b, c$. The numbers close to the nodes and the arcs are the (fractional) values of the flow. The dotted ellipses represent the boundary between $U$ and the rest of the graph, which means that the ingoing flow through the dotted ellipse must be equal to the outgoing flow. 
This property is guaranteed by the constraints~\eqref{eq:CTOP-ST-MINconnectivity}.
It is easy to verify that the lemma~\ref{lemma} holds in Figure~\ref{noVCCs} while not in Figure~\ref{VCCs} due to the presence 
of a cycle. 
We stress the difference between the flow value on the arcs $(a,c)$ in~\ref{noVCCs} and $(c, a)$ in~\ref{VCCs}, with dark grey rectangle.

The application of the lemma~\ref{lemma} allows us to consider only the cycles inside $\overline{G}$ instead of taking into account the entire graph. Hence, we can simply separate all the violated SECs by using an algorithm for 
the elementary cycles detection \parencite{Hawick2008}. Asymptotically, this separation method results to be faster 
than the classical one used in literature that runs the max-flow/min-cut algorithm 
(see, e.g., the algorithm in \cite{boykov2004}) for each node of the graph. 
Practically, we separate them in $\mathcal{O}\left(\left(|N| + |A|\right) \cdot \mu \right)$ instead of $\mathcal{O}\left(|N|^3 \cdot |A| \cdot \tau \right)$ 
where, $\mu$ represents the number of cycles inside $\overline{G}$ and $\tau$, the cost of the maximum flow/minimum cut 
for the same graph.

\subsubsection{Logical inequalities}
\label{lii}

The mathematical formulation \ref{mod:CTOP-ST-MIN} allows the presence of routes that contain logical incompatibilities 
in a relaxed solution. The Logical Inequalities (LIs) have the same structure of the SIs ones presented in Section~\ref{isi},
and their aims is to cut off every route composed by a pair of logically incompatible nodes, at least. 
For a given route $r$, we identify all the subpaths $p$ of customers belonging to $r$ checking 
if the starting and ending nodes of $p$ are logical incompatible. 
In this case, we can consider this inequality:
\begin{equation}
    \label{logicalIneq}
    \sum_{i \in \hat{N}(p)} \sum_{j \in \hat{N}(p)} x_{ij} \leq |\hat{N}(p)| - 2 .
\end{equation}
Even in this case, this cut imposes that the nodes contained in $\hat{N}(p)$ must be visited by 
two routes at least. 



\subsection{A cutting-plane algorithm}
\label{cp}

We propose an exact algorithm for solving the TOP-ST-MIN based on the cutting-plane methodology 
that adopts all the valid inequalities barely introduced to strengthen the linear programming 
formulation. We refer to such an algorithm as CPA.

We recall that the classic cutting-plane method iteratively refines the feasible region by 
adding valid linear inequalities obtained by solving the corresponding separation problem. 
Such a valid inequalities reduces the search space. This process is repeated until an 
optimal integer solution is found. 
%
%
For every optimal relaxed solution, we build the graph $\overline{G}$, and we calculate 
the entire set $R$ of routes contained in $\overline{G}$ with a modified version of the 
Depth-First Search (DFS) algorithm. 

\paragraph*{Valid Inequalities}
First, we consider are the \emph{Subpath inequalities} (Section~\ref{si}). To separate them, 
we take into account all the 
infeasible routes $r \in R$ $\left(\delta_r > T_{\max}\right)$ where, $\delta_r$ represents the sum of 
travel and service times of the arcs and nodes composing $r$. For each feasible subpath $p$ of 
customers belonging to $r$, 
we create the associated subroute $s$ adding the source (destination) node at the beginning (end) of $p$. 
If $\delta_s$ results to be less or equals $T_{\max}$, we build the sets $L(p)$ and $R(p)$ and we check if the 
associated inequalities are violated. 

After that, we consider the \emph{Set inequalities} (Section~\ref{isi}) and \emph{Route inequalities} (Section~\ref{iri}). 
To separate the first one, we calculate the Helsgaun lower bound $H_{LB}$ on the sub-graph $G(r)$ composed by the nodes 
in $r$ and, if $H_{LB}$ results to be greater than $T_{\max}$, the first inequality is checked for violation. Otherwise, 
we check if the second one is violated. 
Furthermore, we also check the \emph{Logical inequalities} (Section~\ref{lii}) for violation when we are solving the \topPL{}.

Finally, we consider all the elementary cycles inside $\overline{G}$ and we check if the associated \emph{subtours 
Constraints} (Section~\ref{secs_}) are violated.

\paragraph*{Preprocessing}
Some instances can contains nodes and arcs that are unreachable due to the restricted time budget and to the incompatibilities.
A node or an arcs is unreachable if and only if it makes the route infeasible as soon as it is included in the route. Except for the physical incompatibilities, that we already consider in the formulation, we define the sets $U_n$ and $U_e$ as the sets are the unreachable set of nodes and the unreachable set of arcs. Both are filled keeping into account the total route time and the logical incompatibilities. For the first case, a node (an arc) is considered to be unreachable if and only if, including it in a route, its total travel time exceeds $T_{\max}$. In the second case instead, we simply consider unreachable each arc $(i,j)$ in which the nodes $i$ and $j$ are logical incompatible. If the two sets are not empty, we can fix the corresponding variables as follows:
\begin{align} 
y_{k} \leq 0, \quad \forall \: k \in U_n \quad \text{ and } \quad
x_{ij} \leq 0, \quad \forall \: (i,j) \in U_e.
\end{align}

\paragraph*{Implementation}

The pseudocode of the CPA callback is reported in Algorithm \ref{algo:bc} and makes use of the following procedures:
%
\begin{inparaenum}[(i)]
  \item \texttt{buildGraph} builds the graph $\overline{G}$ from the relaxed solution;
  \item \texttt{buildRoutes} builds the set $R$ of routes from the graph $\overline{G}$ with a variant of the DFS algorithm;
  \item \texttt{buildSubRoute} builds the corresponding subroute from the subpath $p$;
  \item \texttt{buildSubGraph} builds the corresponding the sub-graph $G(r)$ from the nodes contained in the route $r$;
  \item \texttt{calculateTravelTimeBound} calculate the Helsgaun lower bound for the sub-graph $G(r)$;
  \item \texttt{buildSets} builds the corresponding sets $L(p)$ and $R(p)$ from the subpath $p$.
\end{inparaenum}
We developed the overall CPA callback as an embedded procedure inside the CPLEX 
environment, which is called for every branching node with an optimal fractional 
solution.

\allowdisplaybreaks{
\begin{algorithm}[!ht]
\KwData{$N$, $T_{\max}$, $t_{ij}$, $\overline{x}_{ij}$ with $i,j \in \{1, \dots, n\}$, $s_k$, $\overline{y}_k$ with $k \in \{2, \dots, n - 1\}$}
    $\overline{G} \leftarrow$ buildGraph($\overline{x}_{ij}$, $\overline{y}_k$) \;
    $R \leftarrow$ buildRoutes($\overline{G}$, $1$, $n$) \;
    
    \For{route $r \in R$}{
        \If{$\delta_r > T_{\max}$}{
            \For{subpath $p \in r$}{
                \If{$1 \notin p$ and $n \notin p$}{
                    $s \leftarrow$ buildSubRoute($p$) \;
                    \If{$\delta_s \leq T_{\max}$}{
                        $L(p), R(p) \leftarrow$ buildSets($p$, $N$, $T_{\max}$) \;
                        add inequalities \eqref{subpathIneq1} associated with $p$, $L(p)$ and $R(p)$ \;
                    }
                }
            }

            $G(r) \leftarrow$ buildSubGraph($r$, $t_{ij}$, $s_k$) \;
            $H_{LB} \leftarrow$ calculateTravelTimeBound($G(r)$) \;
    
            \If{$H_{LB} > T_{\max}$}{
                add inequality \eqref{setIneq} associated with $\hat{N}(r)$ \;
            }
            \Else
            {
                add inequality \eqref{routeIneq} associated with $r$ \;
            }
        }

        \For{subpath $p \in r$}{
            \lIf{$1 \notin p$ and $n \notin p$}{
                add inequality \eqref{logicalIneq} associated with $p$
            }
        }
    }

    \For{cycle $c \in \overline{G}$}{
        add inequalities \eqref{eq:secs} associated with $c$ \;
    }

\caption{The \texttt{CPA} callback.} 
\label{algo:bc}
\end{algorithm}
}

\section{The instances generation}
\label{sec:instances}

We generated new instances for the TOP-ST-MIN in such a way to evaluate and validate the 
formulations proposed in Section~\ref{sec:models} and the algorithm described in 
Section~\ref{cp}. The main objective is to generate instances in order to highlight 
the computational impact of the three new features of the TOP-ST-MIN.

As a starting point, we used the TOP benchmark sets available in the 
literature\footnote{The TOP instances are available 
at \url{https://www.mech.kuleuven.be/en/cib/op}}. 
This benchmark set, proposed by \cite{CHAO1996464}, is composed of 387 instances divided 
into 7 sets in accordance with the number of 
nodes ranging from 21 to 102. Each set differs from the number of teams (vehicles, from 2 to 4) 
available to serve the customers and for the value of the maximum time duration of a route. 
The instances are composed by nodes represented though two-dimensional coordinates in a 
Euclidean plane.

%

In order to evaluate the computational impact of the new features, we considered different 
logics to generate the new instances.
In the following, we describe two different generation methods for the mandatory nodes, two 
for the physical incompatibilities, two for the 
logical incompatibilities, and one generation method for the service times. 

We would remark that all of them (except for the service times generation method) are developed 
in a fully-deterministic mode in order to 
control the number and the choice of the mandatory nodes and to decide a specific number and topology of the arcs inside the graph. 
In order to limit the possibility of generating an infeasible instance, we guarantee the existence 
of routes that can visit each single mandatory node without 
violating any incompatibility, and holding the time budget constraint. All the chosen parameters 
in each method have been identified after some preliminary tests.

\subsection{The mandatory nodes generation}
We identified two different methods to generate the mandatory nodes. The former (Scattered Mandatory (SM)) selects the mandatory nodes in such a way to maximise the sum of distances between all the pairs of selected nodes while the latter (Clustered Mandatory (CM)) selects the mandatory nodes in the opposite way, that is minimising the sum of distances between all the pairs of selected nodes.
To do this, we adapted the algorithms recently reported in \cite{Aringhieri2023} for the Maximum Diversity Problem \parencite{Kuo1993}.
We set as mandatory $5\%$ of the total number of customers as \cite{Assunção2020} for the 
Steiner TOP.

\subsection{The physical incompatibilities generation}
We adopt two strategies to define the topology of the arcs composing the graph 
representing the instances. 
The first one (the Clusters-based Physical Incompatibilities (CPI) method) tends to create 
a graph composed of subgraphs in which some of them are completely or partially disconnected 
from each other.

On the contrary, the second one (the Degree-based Physical Incompatibilities (DPI) method) tends 
to generate a graph in which the degree of each node is similar to each other. 
To avoid generating overly simple or infeasible instances, we remove $20\%$ of the total number 
of arcs (practically, we are imposing $20\%$ of physical incompatibilities using one of the 
proposed approaches). 
Finally, we also guarantee a bidirectional crossing of the arcs, i.e.
if $(i,j) \in \hat{A}$ then $(j,i) \in \hat{A}$ or
if $(i,j) \notin \hat{A}$ then $(j,i) \notin \hat{A}$, and vice versa. 

\subsubsection*{The clusters-based approach}

The basic idea is to build a graph by partitioning the nodes and reducing the connections among
nodes belonging to different partitions.
We start by running the K-Means algorithm (\cite{MacQueen1967SomeMF}) to partition 
all the nodes (except the source and destination ones) in $c$ different clusters 
(we choose $c = 3$). It is worth noting that such an algorithm may not create
balanced partitions.
Then, we impose an incompatibility between pairs of clusters (say, e.g., clusters $k$ and $w$) through a uniform
distribution, meaning that all the nodes belonging to cluster $k$ could end up being 
incompatible with the ones belonging to cluster $w$.
The final graph is built by solving an integer linear program which selects a given
amount of arcs to be added in $\hat{A}$ in such a way to minimise the number 
of selected arcs among pairs of incompatible clusters.
As a secondary objective, the model tries to balance the number of arcs inside
a given cluster and towards other (compatible) clusters. The idea is to foster
the generation of instances with a larger number of feasible solutions.
The mathematical model follows.

\allowdisplaybreaks{
\begin{subequations}
\begin{align}
\min \quad & (\gamma + 1) \left(\sum_{i=1}^{n} \sum_{j=1}^{n} \sum_{k=1}^{c} \sum_{w=1}^{c} \lambda_{ik} \: 
\lambda_{jw} \: \phi_{kw} \: x_{ij} \right) + \sigma \label{eq:min_incs}\\
\textrm{s.t.} \quad & \sum_{i=1}^{n} \sum_{j=1}^{n} \sum_{k=1}^{c} \sum_{w=1}^{c} \lambda_{ik} \: 
\lambda_{jw} \: x_{ij} = \gamma, 
\label{eq:cl_edges}\\
\quad & \sum_{i=1}^{n} \sum_{j=1}^{n} \lambda_{ik} \: \lambda_{jk} \: x_{ij} \leq \sigma \quad \forall \: k = 1, \ldots, c,
\label{eq:incs_intra}\\
\quad & \sum_{i=1}^{n} \sum_{j=1}^{n} \sum_{w=1}^{c} \lambda_{ik} \: \lambda_{jw} \: x_{ij} \leq \sigma \quad \forall \: k = 1, \ldots, c,
\label{eq:incs_inter}\\
\quad & x_{ij} = x_{ji}, \quad \forall \: i,j = 1, \ldots, n,
\label{eq:cl_bidirectional}\\
\quad & x_{ij} \in \{0, 1\}, \quad \forall \: i,j = 1, \ldots, n, \label{eq:cl_def1}\\
\quad & \sigma \in \mathbb{N^+} \label{eq:cl_def2}
\end{align}
\end{subequations}
}

The parameters $\lambda_{ik}$ is equal to $1$ if and only if the node $i$ belongs to the cluster $k$, $0$ otherwise, and $\phi_{kw}$ is 
equal to $1$ if and only if the cluster $k$ is incompatible with the cluster $w$, $0$ otherwise. 
The variable $x_{ij}$ is $1$ if the arc $(i,j)$ has been chosen, $0$ otherwise. The variable $\sigma$ counts the maximal number of arcs 
intra and inter clusters. 

The hierarchical objective function~\eqref{eq:min_incs} minimises the arcs that violate the 
physical incompatibilities as first goal balancing the intra 
and inter arcs between clusters as a second goal. The constraint~\eqref{eq:cl_edges} guarantees that exactly $\gamma$ ($\gamma$ is equal to 80\% of $|\hat{A}|$) arcs will be chosen. 
The constraints~\eqref{eq:incs_intra} and~\eqref{eq:incs_inter} calculate the maximal number of arcs intra and inter clusters, 
respectively. The constraints~\eqref{eq:cl_bidirectional} guarantee the bidirectional crossing of the arcs. 
Finally, the constraints~\eqref{eq:cl_def1} and~\eqref{eq:cl_def2} are the variable definition constraints.

\subsubsection*{The degree-based approach}

Compared to the previous approach, the aim is now to select the arcs in such a way that 
the degree of the nodes is similar in such a way to balance the incompatibilities among 
the nodes. To create a graph along these lines, we developed the following integer 
linear program:

\allowdisplaybreaks{
\begin{subequations}
\begin{align}
\min \quad & \alpha \label{eq:max_degree}\\
\textrm{s.t.} \quad & \sum_{i=1}^{n} \sum_{j=1}^{n} \: x_{ij} = \gamma, 
\label{eq:deg_edges}\\
\quad & \sum_{k=1}^{n} \: x_{ik} \leq \alpha \quad \forall \: i = 1, \ldots, n,
\label{eq:degree}\\
\quad & x_{ij} = x_{ji}, \quad \forall \: i,j = 1, \ldots, n,
\label{eq:deg_bidirectional}\\
\quad & x_{ij} \in \{0, 1\}, \quad \forall i,j = 1, \ldots, n,
\label{eq:deg_def1}\\
\quad & \alpha, \beta \in \mathbb{N^+}
\label{eq:deg_def2}
\end{align}
\end{subequations}
}

The objective function~\eqref{eq:max_degree} minimises the maximum degree between 
all vertices.
Together with the constraint~\eqref{eq:deg_edges}, which guarantees that exactly 
$\gamma$ arcs will be chosen, and the constraints~\eqref{eq:degree}, which links
variables $\alpha$ and $x_{ij}$, the objective function fosters the balancing of 
the degree among the nodes.
The constraints~\eqref{eq:deg_bidirectional} guarantee the bidirectional crossing of the arcs.
Finally, the constraints~\eqref{eq:deg_def1} and~\eqref{eq:deg_def2} are the variable 
definition constraints.

\subsection{The logical incompatibilities generation}

We identified two different methods to generate the logical incompatibilities. 
The former (Farthest Logical Incompatibilities (FLI)) imposes a logical incompatibility 
between the node $i$ and every node included in the set of farthest nodes to $i$. On the 
contrary, the latter (Nearest Logical Incompatibilities (NLI)) imposes a logical incompatibility 
between the node $i$ and every node included in the set of nearest nodes to $i$. The set of 
farthest and nearest nodes are calculated by taking $5\%$ of the farthest and nearest nodes 
to $i$, respectively.

The former method favours routes where the customers tend to be close to each other while the
latter favours routes where the customers are distant from one another.

\subsection{The service times generation}

Finally, the last new feature we need to generate is the service times of the customers,
which are generated using a uniform distribution. Essentially, we consider a global amount 
of service time ($\left(m \: T_{\max}\right) \: / \: 2$) to be randomly distributed among 
the nodes. 
Further, in order to limit the generation of infeasible instances, we increase the time 
limit for each route by $T_{\max} \: / \: 2$.


\subsection{The instance generation scheme}

Combining all the presented methods, we developed 4 and 8  
different generation schemes respectively for the \topP{} and \topPL{} instances, 
as reported in Figure~\ref{fig:genschemes}: the root node represents a generic 
initial TOP instance while each level of the tree consists in the generation of a
specific new feature. 
Assuming that the root of this picture represents the level 0, a path from the 
root to a node at the level 2  (dark grey area) describes a generation scheme for a 
\topP{} instance while a path from the root to a leaf (light grey area) describes 
a generation scheme for a \topPL{} instance.

\begin{figure}[!ht]
\centering
\begin{adjustbox}{width=1\textwidth,center}
\begin{tikzpicture}[>=latex, semithick]

\node [rectangle, draw, black, semithick, fill={black!5}, minimum width=16.5cm, minimum height=6.5cm,rounded corners=.55cm] (greenbg) at (8,3.5) {};
\node [rectangle, draw, black, semithick, fill={black!20}, minimum width=14cm, minimum height=4.5cm,rounded corners=.55cm] (bluebg) at (8,4) {};

\node [ellipse, draw, semithick, minimum width=1.5cm, minimum height=0.8cm, fill=white] (TOP) at (8,5.5) {TOP};
\node [ellipse, draw, semithick, minimum width=1.5cm, minimum height=0.8cm, fill=white] (CM) at (4,4) {CM};
\node [ellipse, draw, semithick, minimum width=1.5cm, minimum height=0.8cm, fill=white] (SM) at (12,4) {SM};
\node [ellipse, draw, semithick, minimum width=1.5cm, minimum height=0.8cm, fill=white] (CPI1) at (2,2.5) {CPI};
\node [ellipse, draw, semithick, minimum width=1.5cm, minimum height=0.8cm, fill=white] (DPI1) at (6,2.5) {DPI};
\node [ellipse, draw, semithick, minimum width=1.5cm, minimum height=0.8cm, fill=white] (CPI2) at (10,2.5) {CPI};
\node [ellipse, draw, semithick, minimum width=1.5cm, minimum height=0.8cm, fill=white] (DPI2) at (14,2.5) {DPI};
\node [ellipse, draw, semithick, minimum width=1.5cm, minimum height=0.8cm, fill=white] (FLI1) at (1,1) {FLI};
\node [ellipse, draw, semithick, minimum width=1.5cm, minimum height=0.8cm, fill=white] (NLI1) at (3,1) {NLI};
\node [ellipse, draw, semithick, minimum width=1.5cm, minimum height=0.8cm, fill=white] (FLI2) at (5,1) {FLI};
\node [ellipse, draw, semithick, minimum width=1.5cm, minimum height=0.8cm, fill=white] (NLI2) at (7,1) {NLI};
\node [ellipse, draw, semithick, minimum width=1.5cm, minimum height=0.8cm, fill=white] (FLI3) at (9,1) {FLI};
\node [ellipse, draw, semithick, minimum width=1.5cm, minimum height=0.8cm, fill=white] (NLI3) at (11,1) {NLI};
\node [ellipse, draw, semithick, minimum width=1.5cm, minimum height=0.8cm, fill=white] (FLI4) at (13,1) {FLI};
\node [ellipse, draw, semithick, minimum width=1.5cm, minimum height=0.8cm, fill=white] (NLI4) at (15,1) {NLI};

\draw [->, thick](TOP) to (CM);
\draw [->, thick](TOP) to (SM);
\draw [->, thick](CM) to (CPI1);
\draw [->, thick](CM) to (DPI1);
\draw [->, thick](SM) to (CPI2);
\draw [->, thick](SM) to (DPI2);
\draw [->, thick](CPI1) to (FLI1);
\draw [->, thick](CPI1) to (NLI1);
\draw [->, thick](DPI1) to (FLI2);
\draw [->, thick](DPI1) to (NLI2);
\draw [->, thick](CPI2) to (FLI3);
\draw [->, thick](CPI2) to (NLI3);
\draw [->, thick](DPI2) to (FLI4);
\draw [->, thick](DPI2) to (NLI4);

\end{tikzpicture}
\end{adjustbox}
\caption{Illustration of the instances generation schemes.}
\label{fig:genschemes}
\end{figure}
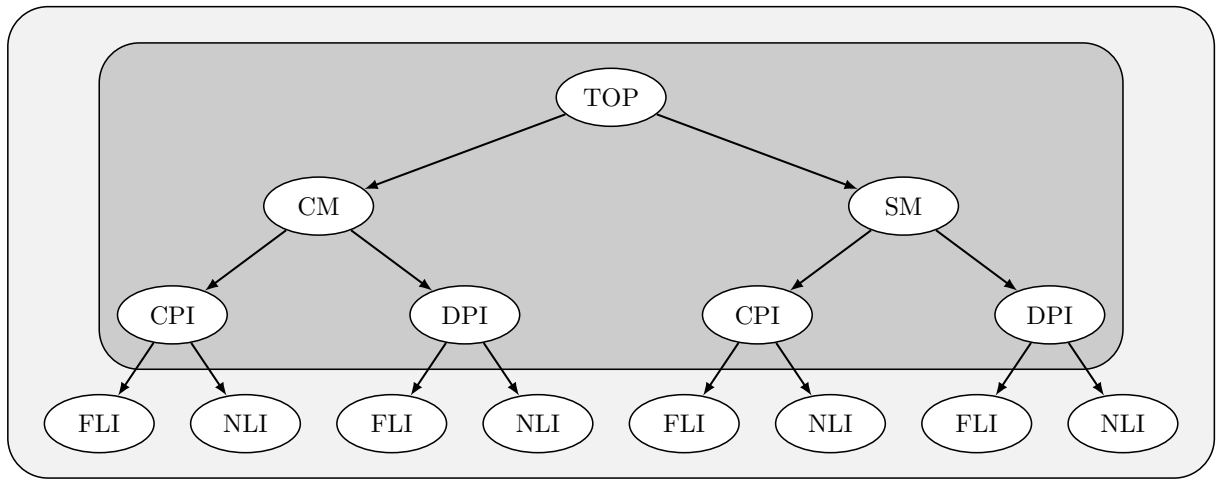

Table~\ref{instances} summarises the results of the generation process. 
Since the number of TOP-ST-MIN instances would have been too large, we considered a subset of the instances belonging to the initial TOP benchmark set.
The columns TOP report the information about the TOP instances (\cite{CHAO1996464}), that is 
the number of instances taken into account for the generation process, and the number of nodes 
and arcs for each instance in the initial benchmark set.
The columns \topP{} report the number of new instances generated, and the number of mandatory 
nodes ($|M|$) and physical incompatibilities ($|I|$) while the columns \topPL{} add the
number of logical incompatibilities ($|C|$). 
Note that all the initial instances are generated on the two-dimensional euclidean plane and, 
as a consequence, the classical triangle inequality property holds for each new generated instance. 

Finally, we grouped the new generated instances in three benchmark sets based on the 
number of nodes (\emph{small}, \emph{medium} and \emph{large}) as reported in 
Table~\ref{instances}.

\begin{table}[!ht]
\centering
\begin{tabular}{@{}lrrrrrrrrrrr@{}}
\toprule
                & \multicolumn{1}{l}{\textbf{}}    & \multicolumn{3}{c}{\textbf{TOP}}                                                                      & \multicolumn{3}{c}{\textbf{TOP-ST-MIN-P}}                                                             & \multicolumn{4}{c}{\textbf{TOP-ST-MIN-PL}}                                                                                               \vspace*{1pt} \\
                & \textbf{Set} & \textbf{\#} & \textbf{$|N|$} & \textbf{$|A|$} & \textbf{\#} & \textbf{$|M|$} & \textbf{$|I|$} & \textbf{\#} & \textbf{$|M|$} & \textbf{$|I|$} & \textbf{$|C|$} \\
                 \cmidrule(r){1-2} \cmidrule(lr){3-5} \cmidrule(lr){6-8} \cmidrule(l){9-12}
\textbf{SMALL}  & 1                                & 15                              & 32                               & 992                              & 60                              & 2                                & 198                              & 120                             & 2                                & 198                              & 30                               \\
\textbf{}       & 2                                & 9                               & 21                               & 420                              & 36                              & 1                                & 84                               & 72                              & 1                                & 84                               & 10                               \\
\textbf{}       & 3                                & 15                              & 33                               & 1056                             & 60                              & 2                                & 211                              & 120                             & 2                                & 211                              & 30                               \\
\textbf{}       & \textbf{ALL}                     & 39                              & -                                & -                                & 156                             & -                                & -                                & 312                             & -                                & -                                & -                                \\
\cmidrule(r){1-2} \cmidrule(lr){3-5} \cmidrule(lr){6-8} \cmidrule(l){9-12}
\textbf{MEDIUM} & 5                                & 15                              & 66                               & 4290                             & 60                              & 3                                & 858                              & 120                             & 3                                & 858                              & 86                               \\
\textbf{}       & 6                                & 15                              & 64                               & 4032                             & 60                              & 3                                & 806                              & 120                             & 3                                & 806                              & 100                              \\
\textbf{}       & \textbf{ALL}                     & 30                              & -                                & -                                & 120                             & -                                & -                                & 240                             & -                                & -                                & -                                \\
\cmidrule(r){1-2} \cmidrule(lr){3-5} \cmidrule(lr){6-8} \cmidrule(l){9-12}
\textbf{LARGE}  & 4                                & 15                              & 100                              & 9900                             & 60                              & 5                                & 1980                             & 120                             & 5                                & 1980                             & 230                              \\
& 7                                & 15                              & 102                              & 10302                            & 60                              & 5                                & 2060                             & 120                             & 5                                & 2060                             & 240                              \\
                & \textbf{ALL}                     & 30                              & -                                & -                                & 120                             & -                                & -                                & 240                             & -                                & -                                & -                                \\ \bottomrule
\end{tabular}
\caption{A summary of the newly generated instances.}
\label{instances}
\end{table}

\section{Quantitative analysis}
\label{sec:analysis}

In this section, we report an extensive quantitative analysis in order to validate our 
approach and to provide some insights concerning the structures of our newly generated 
instances.
We will report the results both for \topP{} and \topPL{}.
First, we provide a computational comparison of the two formulations presented in Section~\ref{sec:models} (Sect~\ref{fc}).
Then we evaluate the impact of the valid inequalities discussed in 
Section~\ref{valid-inequalities} (Sect.~\ref{vic}).
The results of the CPA proposed in Section~\ref{cp} are reported in Section~\ref{results}.
Finally, the results of an adaptation of the CPA to solve the TOP is compared with the 
best known results of the literature (Sect.~\ref{results-top}).




The CPA has been implemented in C++ by using CPLEX 22.1.1 Concert Technology. The 
code was compiled in Scientific Linux 7.9 (Nitrogen). The experiments were carried 
out on a 64-bit Linux machine, with a Six-Core AMD Opteron(tm) Processor 8425 HE, 
2.10 GHz, and 24 GB of RAM. CPLEX built-in cuts have been used in all experiments. 
We run CPLEX in a fully-deterministic mode in order to be able to fully reproduce 
the results. All the computational experiments have been run by setting a time 
limit of $7200$ seconds as many authors in the literature 
(see, e.g., \cite{Hanafi2020} and \cite{Assunção2020}).

\subsection{Comparing mixed and compact formulations}
\label{fc}

The main purpose of this section is to compare the mixed formulation 
(Sect.~\ref{ssec:MTOP-ST-MIN}) and the compact formulation 
(Sect.~\ref{ssec:CTOP-ST-MIN}) from a computational point 
of view in order to validate the choice of using the compact formulation in our solution approach. 
The comparisons have been obtained by using CPLEX as a general purpose solver. 


Table~\ref{PTOP-ST-MINPrel} reports a brief comparison between the two models for the \topP{}
(formulations~\eqref{mod:MTOP-ST-MIN} and~\eqref{mod:CTOP-ST-MIN})
considering the 156 instances belonging to the small benchmark set.
We classify the results by number of available vehicles $m$ (from 2 to 4) to analyse the 
performances of both models. 
The column OPT reports the number of instances optimally solved, CPU(s) the average CPU time 
(in seconds) used by CPLEX to certify the optimality, and NODES the average number of branching nodes. 

\begin{table}[!ht]
\centering
\begin{adjustbox}{width=1\textwidth,center}
\begin{tabular}{@{}ccrrrrrrrr@{}}
\toprule
\textbf{} & \textbf{} & \multicolumn{3}{c}{\textbf{Mixed formulation}} & \multicolumn{3}{c}{\textbf{Compact formulation}} & \multicolumn{2}{c}{\textbf{Gaps}} \vspace*{1pt} \\
\textbf{$m$} & \textbf{\#} & \textbf{OPT} & \textbf{CPU(s)} & \textbf{NODES} & \textbf{OPT} & \textbf{CPU(s)} & \textbf{NODES} & \textbf{CPU} & \textbf{NODES} \\ \cmidrule(r){1-2} \cmidrule(lr){3-5} \cmidrule(lr){6-8} \cmidrule(l){9-10}
2 & 52 & 52 & 203.77 & 10520 & 52 & 40.76 & 6403 & -399.89\% & -64.29\% \\
3 & 52 & 52 & 505.52 & 14234 & 52 & 25.72 & 3376 & -1865.45\% & -321.57\% \\
4 & 52 & 51 & 348.62 & 6848 & 52 & 4.94 & 1066 & -6955.07\% & -542.63\% \\ \bottomrule
\end{tabular}
\end{adjustbox}
\caption{The comparison between the mixed and the compact formulations for \topP{}.}
\label{PTOP-ST-MINPrel}
\end{table}

The results reported in Table~\ref{PTOP-ST-MINPrel} show that the compact formulation dominates 
the mixed formulation when solving the \topP{}. In fact, even if the number of optimally solved 
instances is almost the same, the gaps in terms of running time and explored nodes are in favour
of the compact formulation and tend to grow as soon as the number of routes increases.

\begin{table}[!ht]
\centering
\begin{adjustbox}{width=1\textwidth,center}
\begin{tabular}{@{}ccrrrrrrrr@{}}
\toprule
 &  & \multicolumn{3}{c}{\textbf{Mixed formulation}} & \multicolumn{3}{c}{\textbf{Compact formulation}} & \multicolumn{2}{c}{\textbf{Gaps}} \vspace*{1pt} \\
\textbf{$m$} & \textbf{\#} & \textbf{OPT} & \textbf{CPU(s)} & \textbf{NODES} & \textbf{OPT} & \textbf{CPU(s)} & \textbf{NODES} & \textbf{CPU} & \textbf{NODES} \\  \cmidrule(r){1-2} \cmidrule(lr){3-5} \cmidrule(lr){6-8} \cmidrule(l){9-10}
2 & 104 & 103 & 118.67 & 4665 & 97 & 583.76 & 41467 & 79.67\% & 88.75\% \\
3 & 104 & 103 & 249.10 & 7224 & 103 & 160.61 & 10973 & -55.10\% & 34.17\% \\
4 & 104 & 104 & 229.79 & 5262 & 104 & 51.79 & 4838 & -343.75\% & -8.78\% \\ \bottomrule
\end{tabular}
\end{adjustbox}
\caption{The comparison between the mixed and the compact formulations for \topPL{}.}
\label{PLTOP-ST-MINPrel}
\end{table}

Table \ref{PLTOP-ST-MINPrel} reports the results of the comparison for the \topPL{}
(formulations \eqref{mod:MTOP-ST-MIN}+\eqref{mod:MTOP-ST-MIN-logic-incomp}, 
and \eqref{mod:CTOP-ST-MIN}+\eqref{mod:CTOP-ST-MIN-logic-incomp})
for which the dominance is less evident than those for the \topP{}.
As a matter of fact, the mixed formulation obtains better results for instances with only 2 
available vehicles ($m=2$). 
On the contrary, the compact formulation produces better results as soon as the number 
of routes increases. This behaviour may be explained by the larger number of variables 
in the mixed formulation due to the presence of three-index variables $x_{ijr}$.

\subsection{Evaluating the impact of the valid inequalities}
\label{vic}

Prior to analysing the overall results of the CPA, an evaluation was conducted 
to establish the impact of the valid inequalities -- reported in Section~\ref{valid-inequalities} 
-- in the solution process.
The tests were carried out on the small benchmark set.
The column OPT reports the number of instances optimally solved, CPU(s) the 
average CPU time (in seconds) spent by the algorithm to certify the optimality, NODES the average number 
of explored branching nodes, and GAP(\%) the average mip gap of the unsolved instances. 
The first five rows report the results obtained using a single family of 
inequalities while the last two rows report respectively the results using 
all inequalities and none (CPLEX with only bbuilt-in cuts).

\begin{table}[!ht]
\centering
\begin{adjustbox}{width=1\textwidth,center}
\begin{tabular}{@{}lrrrrrrrr@{}}
\toprule
& \multicolumn{4}{c}{\textbf{\topP{}}} & \multicolumn{4}{c}{\textbf{\topPL{}}} \\
\textbf{Valid inequalities} & 
\multicolumn{1}{c}{\textbf{OPT}} & \multicolumn{1}{c}{\textbf{CPU(s)}} & \multicolumn{1}{c}{\textbf{NODES}} & \multicolumn{1}{c}{\textbf{GAP(\%)}} &
\multicolumn{1}{c}{\textbf{OPT}} & \multicolumn{1}{c}{\textbf{CPU(s)}} & \multicolumn{1}{c}{\textbf{NODES}} & \multicolumn{1}{c}{\textbf{GAP(\%)}} \\ 
\cmidrule(r){1-1} \cmidrule(rl){2-5} \cmidrule(l){6-9}
RIs \eqref{iri} & 156 & 24.11 & 3219 & - & 302 & 416.11 & 23957 & 7.73 \\
SIs \eqref{isi} & 156 & 26.97 & 3556 & - & 302 & 440.03 & 28224 & 6.83 \\
SPIs \eqref{si} & 156 & 26.45 & 3362 & - & 305 & 386.63 & 25314 & 5.09 \\
SECs \eqref{secs_} & 156 & 18.11 & 1639 & - & 302 & 446.78 & 23206 & 8.48 \\
LIs \eqref{lii} & -- & -- & -- & -- & 312 & 77.47 & 5287 & - \\
CPA \eqref{cp} & 156 & 17.50 & 1765 & - & 312 & 59.61 & 3254 & - \\
NONE (CPLEX) & 156 & 24.16 & 3683 & - & 304 & 257.01 & 18511 & 5.88
 \\ \bottomrule
\end{tabular}
\end{adjustbox}
\caption{The impact of the valid inequalities for the \topP{} and the \topPL{}.}
\label{TOP-ST-MINCutsImpact}
\end{table}

The results reported in Table~\ref{TOP-ST-MINCutsImpact} clearly show that the separation 
scheme adopted in the CPA produces the best results in terms of running time and 
number of explored nodes. 
On the contrary, if we consider each valid inequality singularly, the SECs (Sect.~\ref{secs_}) 
for the \topP{} and the Logical inequalities (Sect.~\ref{lii}) for the \topPL{} seems to perform 
quite well alone. 
In both cases, these valid inequalities are those that notably reduce the number of 
branching nodes explored. 

It is worth noting that all the remaining valid inequalities -- when used singularly -- increase 
the number of explored nodes compared to CPLEX with only built-in cuts. 
Furthermore, the logical inequalities are the only inequalities that actually allow to solve all 
the \topPL{} instances to optimality.

\subsection{Results on new benchmark instances}
\label{results}

The main purpose of this section is to report the extensive computational analysis of the 
CPA on the new benchmark instance sets, which are generated as discussed in 
Section~\ref{sec:instances}.
In the following, we first report the results for \topP{} and then for \topPL{.}

Table~\ref{PTOP-ST-MIN-BC} reports the results for the \topP{.}
We classify the results into several different solution categories.
The columns OPT describe the statistics related to the instances optimally solved.
The columns NO OPT describe the statistics related to the instances for which a solution has 
been found without proving its optimality.
The columns INFS describe the statistics related to the instances proven to be infeasible. 
Finally, the columns NO SOLS describe the statistics related to the instances for which no 
solution has been found without proving their infeasibility either.
For each of them, the column \# reports the number of instances in that category.
The column CPU(s) reports the average running time (in seconds) to certify the optimality or 
the infeasibility, NODES the average number of branching nodes explored, and GAP(\%) 
the average mip gap of the unsolved instances. 

The results are grouped by benchmark set and then by feature generation methods. 
We remark that each benchmark set has been partitioned by feature generation methods since, 
given a specific feature, all the related generation methods described in 
Figure~\ref{fig:genschemes} create a partition of the benchmark sets. For example, taking 
into account the mandatory nodes, the CM and SM generation methods induce a partition on 
the set of instances considered. As a consequence, the instances considered in rows CM 
and SM on the one hand, and CPI and DPI on the other are the same, but partitioned 
differently.

\begin{table}[!ht]
\begin{adjustbox}{width=1\textwidth,center}
\begin{tabular}{@{}lrrrrrrrrrrrrr@{}}
\toprule
                &              &              & \multicolumn{3}{c}{\textbf{OPT}}                & \multicolumn{3}{c}{\textbf{NO OPT}}            & \multicolumn{3}{c}{\textbf{INFS}}              & \multicolumn{2}{c}{\textbf{NO SOLS}} \vspace*{1pt} \\
                &              & \textbf{\#} & \textbf{\#}  & \textbf{CPU(s)}  & \textbf{NODES} & \textbf{\#} & \textbf{NODES} & \textbf{GAP(\%)} & \textbf{\#} & \textbf{CPU(s)} & \textbf{NODES} & \textbf{\#}     & \textbf{NODES}     \\ \cmidrule(r){1-3} \cmidrule(lr){4-6} \cmidrule(lr){7-9} \cmidrule(lr){10-12} \cmidrule(l){13-14}
\textbf{SMALL}  & \textbf{ALL} & \textbf{156} & \textbf{156} & \textbf{17.50}   & \textbf{1765}  & \textbf{0}  & \textbf{-}     & \textbf{-}       & \textbf{0}  & \textbf{-}      & \textbf{-}     & \textbf{0}      & \textbf{-}         \\ \cmidrule(lr){2-3} \cmidrule(lr){4-6} \cmidrule(lr){7-9} \cmidrule(lr){10-12} \cmidrule(l){13-14}
\textbf{}       & CM  & 78           & 78           & 29.58            & 2959           & 0           & -              & -                & 0           & -               & -              & 0               & -                  \\
\textbf{}       & SM  & 78           & 78           & 5.41             & 571            & 0           & -              & -                & 0           & -               & -              & 0               & -                  \\
\textbf{}       & CPI  & 78           & 78           & 15.71            & 1620           & 0           & -              & -                & 0           & -               & -              & 0               & -                  \\
\textbf{}       & DPI  & 78           & 78           & 19.28            & 1911           & 0           & -              & -                & 0           & -               & -              & 0               & -                  \\ \cmidrule(r){1-3} \cmidrule(lr){4-6} \cmidrule(lr){7-9} \cmidrule(lr){10-12} \cmidrule(l){13-14}
\textbf{MEDIUM} & \textbf{ALL} & \textbf{120} & \textbf{108} & \textbf{655.12}  & \textbf{18148} & \textbf{7} & \textbf{96676} & \textbf{4.00}    & \textbf{2}  & \textbf{20.07}  & \textbf{0}     & \textbf{3}      & \textbf{43334}     \\ \cmidrule(lr){2-3} \cmidrule(lr){4-6} \cmidrule(lr){7-9} \cmidrule(lr){10-12} \cmidrule(l){13-14}
\textbf{}       & CM  & 60           & 54           & 865.84           & 26350          & 6           & 106596         & 3.34             & 0           & -               & -              & 0               & -                  \\
\textbf{}       & SM  & 60           & 54           & 444.41           & 9946           & 1           & 61954          & 6.33             & 2           & 20.07           & 0              & 3               & 43334              \\
\textbf{}       & CPI  & 60           & 55           & 590.02           & 17155          & 3           & 91326          & 5.85             & 1           & 22.20            & 0              & 1               & 50396              \\
\textbf{}       & DPI  & 60           & 53           & 722.68           & 19178          & 4           & 99350          & 3.08             & 1           & 17.94           & 0              & 2               & 36272              \\ \cmidrule(r){1-3} \cmidrule(lr){4-6} \cmidrule(lr){7-9} \cmidrule(lr){10-12} \cmidrule(l){13-14}
\textbf{LARGE}  & \textbf{ALL} & \textbf{120} & \textbf{1}   & \textbf{1607.82} & \textbf{43073} & \textbf{49} & \textbf{32541} & \textbf{5.12}    & \textbf{17} & \textbf{6.67}   & \textbf{0}     & \textbf{53}     & \textbf{12599}     \\ \cmidrule(lr){2-3} \cmidrule(lr){4-6} \cmidrule(lr){7-9} \cmidrule(lr){10-12} \cmidrule(l){13-14}
\textbf{}       & CM  & 60           & 1            & 1607.82          & 43073          & 41          & 34101          & 4.92             & 0           & -               & -              & 18              & 10991              \\
\textbf{}       & SM  & 60           & 0            & -                & -              & 8           & 17467          & 7.13             & 17          & 6.67            & 0              & 35              & 13261              \\
\textbf{}       & CPI  & 60           & 0            & -                & -              & 24          & 41693          & 5.23             & 8           & 6.60            & 0              & 28              & 13031              \\
\textbf{}       & DPI  & 60           & 1            & 1607.82          & 43073          & 25          & 26279          & 5.05             & 9           & 6.73            & 0              & 25              & 12043              \\ \cmidrule(r){1-3} \cmidrule(lr){4-6} \cmidrule(lr){7-9} \cmidrule(lr){10-12} \cmidrule(l){13-14}
\textbf{ALL}    & \textbf{}    & \textbf{396} & \textbf{265} & \textbf{283.36}  & \textbf{8598}  & \textbf{56} & \textbf{46620} & \textbf{4.88}    & \textbf{19} & \textbf{8.15}   & \textbf{0}     & \textbf{56}     & \textbf{13828}     \\ \bottomrule
\end{tabular}
\end{adjustbox}
\caption{The \topP{} results of the CPA.}
\label{PTOP-ST-MIN-BC}
\end{table}

Table~\ref{PTOP-ST-MIN-RESULTS} reports a comparison between CPLEX and the CPA. 
The table presents the same structure of Table~\ref{PTOP-ST-MIN-BC} but with a slightly 
different meaning. The columns CPU and NODES report the average relative differences,
$\Delta$ the absolute difference, and GAP(\%) the average absolute differences.
%
We remark that a negative value means that the CPA is better than CPLEX for the CPU time, number of 
nodes and mip gap difference. On the contrary, a positive value means that the CPA is actually 
better than CPLEX for the number of optimal solutions found.

\begin{table}[!hb]
\centering
\begin{adjustbox}{width=1\textwidth,center}
\begin{tabular}{@{}lrrrrrrrrrrrrr@{}}
\toprule
 &  &  & \multicolumn{3}{c}{\textbf{OPT}} & \multicolumn{3}{c}{\textbf{NO OPT}} & \multicolumn{3}{c}{\textbf{INFS}} & \multicolumn{2}{c}{\textbf{NO SOLS}} \vspace*{1pt} \\
 &  & \textbf{\#} & \textbf{$\Delta$} & \textbf{CPU} & \textbf{NODES} & \textbf{$\Delta$} & \textbf{NODES} & \textbf{GAP(\%)} & \textbf{$\Delta$} & \textbf{CPU} & \textbf{NODES} & \textbf{$\Delta$} & \textbf{NODES} \\ \cmidrule(r){1-3} \cmidrule(lr){4-6} \cmidrule(lr){7-9} \cmidrule(lr){10-12} \cmidrule(l){13-14}
\textbf{SMALL} & \textbf{ALL} & \textbf{156} & \textbf{0} & \textbf{-38.06\%} & \textbf{-108.66\%} & \textbf{0} & \textbf{-} & \textbf{-} & \textbf{0} & \textbf{-} & \textbf{-} & \textbf{0} & \textbf{-} \\ \cmidrule(lr){2-3} \cmidrule(lr){4-6} \cmidrule(lr){7-9} \cmidrule(lr){10-12} \cmidrule(l){13-14}
\textbf{} & CM & 78 & 0 & -42.32\% & -120.80\% & 0 & - & - & 0 & - & - & 0 & - \\
\textbf{} & SM & 78 & 0 & -14.77\% & -45.74\% & 0 & - & - & 0 & - & - & 0 & - \\
\textbf{} & CPI & 78 & 0 & -43.10\% & -121.11\% & 0 & - & - & 0 & - & - & 0 & - \\
\textbf{} & DPI & 78 & 0 & -33.96\% & -98.10\% & 0 & - & - & 0 & - & - & 0 & - \\ \cmidrule(r){1-3} \cmidrule(lr){4-6} \cmidrule(lr){7-9} \cmidrule(lr){10-12} \cmidrule(l){13-14}
\textbf{MEDIUM} & \textbf{ALL} & \textbf{120} & \textbf{0} & \textbf{-21.93\%} & \textbf{-80.32\%} & \textbf{1} & \textbf{-82.79\%} & \textbf{-0.46} & \textbf{0} & \textbf{4.68\%} & \textbf{0.00\%} & \textbf{-1} & \textbf{-133.37\%} \\ \cmidrule(lr){2-3} \cmidrule(lr){4-6} \cmidrule(lr){7-9} \cmidrule(lr){10-12} \cmidrule(l){13-14}
\textbf{} & CM & 60 & 0 & -34.73\% & -98.60\% & 1 & -70.58\% & -0.52 & 0 & - & - & -1 & - \\
\textbf{} & SM & 60 & 0 & -3.00\% & -31.88\% & 0 & -156.30\% & -0.23 & 0 & 4.68\% & 0.00\% & 0 & -133.37\% \\
\textbf{} & CPI & 60 & 0 & -23.97\% & -80.67\% & 2 & -81.62\% & -0.47 & 0 & 4.95\% & 0.00\% & -2 & -117.41\% \\
\textbf{} & DPI & 60 & 0 & -20.20\% & -79.99\% & -1 & -83.32\% & -0.45 & 0 & 4.35\% & 0.00\% & 1 & -155.54\% \\ \cmidrule(r){1-3} \cmidrule(lr){4-6} \cmidrule(lr){7-9} \cmidrule(lr){10-12} \cmidrule(l){13-14}
\textbf{LARGE} & \textbf{ALL} & \textbf{120} & \textbf{0} & \textbf{-63.05\%} & \textbf{-86.78\%} & \textbf{8} & \textbf{-37.64\%} & \textbf{-0.99} & \textbf{1} & \textbf{5.09\%} & \textbf{-} & \textbf{-9} & \textbf{-46.33\%} \\ \cmidrule(lr){2-3} \cmidrule(lr){4-6} \cmidrule(lr){7-9} \cmidrule(lr){10-12} \cmidrule(l){13-14}
\textbf{} & CM & 60 & 0 & -63.05\% & -86.78\% & 6 & -37.56\% & -1.07 & 0 & - & - & -6 & -53.72\% \\
\textbf{} & SM & 60 & 0 & - & - & 2 & -39.06\% & -0.30 & 1 & 5.09\% & - & -3 & - \\
\textbf{} & CPI & 60 & 0 & - & - & 8 & -41.12\% & -1.08 & 0 & 6.63\% & - & -8 & - \\
\textbf{} & DPI & 60 & 0 & -63.05\% & -86.78\% & 0 & -33.86\% & -0.94 & 1 & 3.58\% & - & -1 & -41.91\% \\ \cmidrule(r){1-3} \cmidrule(lr){4-6} \cmidrule(lr){7-9} \cmidrule(lr){10-12} \cmidrule(l){13-14}
\textbf{ALL} & \textbf{} & \textbf{396} & \textbf{0} & \textbf{-23.40\%} & \textbf{-83.87\%} & \textbf{9} & \textbf{-58.19\%} & \textbf{-0.88} & \textbf{1} & \textbf{4.98\%} & \textbf{-} & \textbf{-10} & \textbf{-57.24\%} \\ \bottomrule
\end{tabular}
\end{adjustbox}
\caption{The comparison between the CPA with CPLEX for the \topP{}.}
\label{PTOP-ST-MIN-RESULTS}
\end{table}

As reported in Table~\ref{PTOP-ST-MIN-BC}, the instances with scattered mandatory nodes (SM) 
appear easier to solve to optimality compared to those with clustered mandatory nodes (CM). 
As a matter of fact, in addition to the number of solved instances, the running time and 
the number of explored nodes are smaller for the SM instances. 
Such differences are more evident for medium size instances.

This behaviour could be explained by the following facts.
On one hand, the distances between mandatory nodes are expected to be large 
when they are scattered. On the other, if the mandatory nodes are clustered, 
the distances between them will tend to be smaller. 
This could imply that a single route may visit more mandatory nodes in those instances with
clustered mandatory nodes. Thus, the number of feasible solutions could be greater than those
in instances with scattered mandatory nodes, which makes them harder to solve to optimality 
(but easier in terms of finding a feasible solution).

With regard to the physical incompatibilities, there are no huge differences between the 
clusters-based (CPI) and degree-based (DPI) instances.

The results of the comparison between the CPA and CPLEX reported in 
Table~\ref{PTOP-ST-MIN-RESULTS} show that the CPA outperforms CPLEX,
which nevertheless seems a little bit faster than our algorithm to certify the infeasibility
of an instance. 
We also notice that the time needed to certify the optimality and the quality of 
the gaps provided for the instances not solved, are systematically in favour of 
the CPA, sometimes by a large amount.
It is worth noting that the CPA is capable to find $9$ more solutions
and to certify $1$ more infeasibility than CPLEX (this can be seen directly from the NO OPT and INFS columns).

In both tables, the time needed to certify the optimality and the quality of the gaps provided for the instances not solved, are systematically in favour of the CPA, sometimes by a large amount.

\begin{table}[!ht]
\begin{adjustbox}{width=1\textwidth,center}
\begin{tabular}{@{}lrrrrrrrrrrrrr@{}}
\toprule
                &              &           & \multicolumn{3}{c}{\textbf{OPT}}              & \multicolumn{3}{c}{\textbf{NO OPT}}            & \multicolumn{3}{c}{\textbf{INFS}}              & \multicolumn{2}{c}{\textbf{NO SOLS}} \vspace*{1pt} \\
                &              & \textbf{\#} & \textbf{\#}  & \textbf{CPU(s)}  & \textbf{NODES} & \textbf{\#} & \textbf{NODES} & \textbf{GAP(\%)} & \textbf{\#} & \textbf{CPU(s)} & \textbf{NODES} & \textbf{\#}     & \textbf{NODES}     \\ \cmidrule(r){1-3} \cmidrule(lr){4-6} \cmidrule(lr){7-9} \cmidrule(lr){10-12} \cmidrule(l){13-14}
\textbf{SMALL}  & \textbf{ALL} & \textbf{312}       & \textbf{312}         & \textbf{59.61}           & \textbf{3254}           & \textbf{0}           & \textbf{-}              & \textbf{-}                & \textbf{0}           & \textbf{-}               & \textbf{-}              & \textbf{0}               & \textbf{-}                  \\ \cmidrule(lr){2-3} \cmidrule(lr){4-6} \cmidrule(lr){7-9} \cmidrule(lr){10-12} \cmidrule(l){13-14}
\textbf{}       & CM  & 156       & 156         & 99.95           & 5453           & 0           & -              & -                & 0           & -               & -              & 0               & -                  \\
\textbf{}       & SM  & 156       & 156         & 19.27           & 1055           & 0           & -              & -                & 0           & -               & -              & 0               & -                  \\
\textbf{}       & CPI  & 156       & 156         & 59.99           & 3154           & 0           & -              & -                & 0           & -               & -              & 0               & -                  \\
\textbf{}       & DPI  & 156       & 156         & 59.23           & 3354           & 0           & -              & -                & 0           & -               & -              & 0               & -                  \\
\textbf{}       & FLI  & 156       & 156         & 55.05           & 2845           & 0           & -              & -                & 0           & -               & -              & 0               & -                  \\
\textbf{}       & NLI  & 156       & 156         & 64.42           & 3686           & 0           & -              & -                & 0           & -               & -              & 0               & -                  \\ \cmidrule(r){1-3} \cmidrule(lr){4-6} \cmidrule(lr){7-9} \cmidrule(lr){10-12} \cmidrule(l){13-14}
\textbf{MEDIUM} & \textbf{ALL} & \textbf{240}       & \textbf{137}         & \textbf{807.26}          & \textbf{11377}          & \textbf{90}          & \textbf{52525}          & \textbf{5.50}             & \textbf{0}           & \textbf{-}               & \textbf{-}              & \textbf{13}              & \textbf{25647}              \\ \cmidrule(lr){2-3} \cmidrule(lr){4-6} \cmidrule(lr){7-9} \cmidrule(lr){10-12} \cmidrule(l){13-14}
\textbf{}       & CM  & 120       & 61          & 1127.40         & 15841          & 59          & 56243          & 5.25             & 0           & -               & -              & 0               & -                  \\
\textbf{}       & SM  & 120       & 76          & 581.98          & 8236           & 31          & 43312          & 6.11             & 0           & -               & -              & 13              & 25647              \\
\textbf{}       & CPI  & 120       & 62          & 887.98          & 13219          & 51          & 56532          & 5.32             & 0           & -               & -              & 7               & 24835              \\
\textbf{}       & DPI  & 120       & 75          & 733.26          & 9689           & 39          & 47628          & 5.71             & 0           & -               & -              & 6               & 26621              \\
\textbf{}       & FLI  & 120       & 84          & 978.92          & 13588          & 29          & 53488          & 3.90             & 0           & -               & -              & 7               & 24316              \\
\textbf{}       & NLI  & 120       & 53          & 226.86          & 3903           & 61          & 52087          & 6.22             & 0           & -               & -              & 6               & 26755              \\ \cmidrule(r){1-3} \cmidrule(lr){4-6} \cmidrule(lr){7-9} \cmidrule(lr){10-12} \cmidrule(l){13-14}
\textbf{LARGE}  & \textbf{ALL} & \textbf{240}       & \textbf{1}           & \textbf{-}               & \textbf{-}              & \textbf{51}          & \textbf{17964}          & \textbf{12.21}            & \textbf{44}          & \textbf{39.78}           & \textbf{19}             & \textbf{144}             & \textbf{5724}               \\ \cmidrule(lr){2-3} \cmidrule(lr){4-6} \cmidrule(lr){7-9} \cmidrule(lr){10-12} \cmidrule(l){13-14}
\textbf{}       & CM  & 120       & 1           & -               & -              & 49          & 17964          & 12.21            & 0           & -               & -              & 70              & 5508               \\
\textbf{}       & SM  & 120       & 0           & -               & -              & 2           & -              & -                & 44          & 39.78           & 19             & 74              & 5910               \\
\textbf{}       & CPI  & 120       & 0           & -               & -              & 26          & 21449          & 12.26            & 22          & 62.58           & 37             & 72              & 5889               \\
\textbf{}       & DPI  & 120       & 1           & -               & -              & 25          & 14478          & 12.16            & 22          & 15.84           & 0              & 72              & 5559               \\
\textbf{}       & FLI  & 120       & 1           & -               & -              & 44          & 16876          & 10.92            & 19          & 13.46           & 0              & 56              & 6194               \\
\textbf{}       & NLI  & 120       & 0           & -               & -              & 7           & 32100          & 29.01            & 25          & 60.38           & 34             & 88              & 5457               \\ \cmidrule(r){1-3} \cmidrule(lr){4-6} \cmidrule(lr){7-9} \cmidrule(lr){10-12} \cmidrule(l){13-14}
\textbf{ALL}    & \textbf{}    & \textbf{792}       & \textbf{450}         & \textbf{233.31}          & \textbf{5141}           & \textbf{141}         & \textbf{40627}          & \textbf{7.81}             & \textbf{44}          & \textbf{39.78}           & \textbf{19}             & \textbf{157}             & \textbf{7195}               \\ \bottomrule
\end{tabular}
\end{adjustbox}
\caption{The \topPL{} results of the CPA.}
\label{PLTOP-ST-MIN-BC}
\end{table}

Tables~\ref{PLTOP-ST-MIN-BC} and~\ref{PLTOP-ST-MIN-RESULTS} are respectively
the equivalent of Tables~\ref{PTOP-ST-MIN-BC} and \ref{PTOP-ST-MIN-RESULTS}
but for \topPL{.}

\begin{table}[!hb]
\centering
\begin{adjustbox}{width=1\textwidth,center}
\begin{tabular}{@{}lrrrrrrrrrrrrr@{}}
\toprule
 &  &  & \multicolumn{3}{c}{\textbf{OPT}} & \multicolumn{3}{c}{\textbf{NO OPT}} & \multicolumn{3}{c}{\textbf{INFS}} & \multicolumn{2}{c}{\textbf{NO SOLS}} \vspace*{1pt} \\
 &  & \textbf{\#} & \textbf{$\Delta$} & \textbf{CPU} & \textbf{NODES} & \textbf{$\Delta$} & \textbf{NODES} & \textbf{GAP(\%)} & \textbf{$\Delta$} & \textbf{CPU} & \textbf{NODES} & \textbf{$\Delta$} & \textbf{NODES} \\ \cmidrule(r){1-3} \cmidrule(lr){4-6} \cmidrule(lr){7-9} \cmidrule(lr){10-12} \cmidrule(l){13-14}
\textbf{SMALL} & \textbf{ALL} & \textbf{312} & \textbf{8} & \textbf{-331.15\%} & \textbf{-468.83\%} & \textbf{-8} & \textbf{-} & \textbf{-} & \textbf{0} & \textbf{-} & \textbf{-} & \textbf{0} & \textbf{-} \\ \cmidrule(lr){2-3} \cmidrule(lr){4-6} \cmidrule(lr){7-9} \cmidrule(lr){10-12} \cmidrule(l){13-14}
 & CM & 156 & 4 & -289.25\% & -410.68\% & -4 & - & - & 0 & - & - & 0 & - \\
 & SM & 156 & 4 & -548.48\% & -769.31\% & -4 & - & - & 0 & - & - & 0 & - \\
 & CPI & 156 & 4 & -296.99\% & -478.89\% & -4 & - & - & 0 & - & - & 0 & - \\
 & DPI & 156 & 4 & -365.74\% & -459.36\% & -4 & - & - & 0 & - & - & 0 & - \\
 & FLI & 156 & 0 & -17.95\% & -59.09\% & 0 & - & - & 0 & - & - & 0 & - \\
 & NLI & 156 & 8 & -613.26\% & -802.20\% & -8 & - & - & 0 & - & - & 0 & - \\ \cmidrule(r){1-3} \cmidrule(lr){4-6} \cmidrule(lr){7-9} \cmidrule(lr){10-12} \cmidrule(l){13-14}
\textbf{MEDIUM} & \textbf{ALL} & \textbf{240} & \textbf{44} & \textbf{-40.43\%} & \textbf{-91.34\%} & \textbf{-32} & \textbf{-58.63\%} & \textbf{-5.78} & \textbf{0} & \textbf{-} & \textbf{-} & \textbf{-12} & \textbf{-134.00\%} \\ \cmidrule(lr){2-3} \cmidrule(lr){4-6} \cmidrule(lr){7-9} \cmidrule(lr){10-12} \cmidrule(l){13-14} 
 & CM & 120 & 22 & -47.80\% & -107.96\% & -19 & -57.83\% & -6.48 & 0 & - & - & -3 & - \\
 & SM & 120 & 22 & -30.38\% & -68.83\% & -13 & -61.20\% & -4.06 & 0 & - & - & -9 & -134.00\% \\
 & CPI & 120 & 17 & -33.20\% & -80.19\% & -10 & -57.61\% & -5.53 & 0 & - & - & -7 & -160.70\% \\
 & DPI & 120 & 27 & -48.45\% & -105.28\% & -22 & -60.11\% & -6.09 & 0 & - & - & -5 & -104.10\% \\
 & FLI & 120 & 12 & -23.67\% & -60.82\% & -10 & -58.65\% & -0.46 & 0 & - & - & -2 & -129.58\% \\
 & NLI & 120 & 32 & -284.88\% & -450.47\% & -22 & -58.62\% & -8.20 & 0 & - & - & -10 & -137.34\% \\ \cmidrule(r){1-3} \cmidrule(lr){4-6} \cmidrule(lr){7-9} \cmidrule(lr){10-12} \cmidrule(l){13-14}
\textbf{LARGE} & \textbf{ALL} & \textbf{240} & \textbf{1} & \textbf{-} & \textbf{-} & \textbf{2} & \textbf{-19.36\%} & \textbf{-2.44} & \textbf{3} & \textbf{-83.71\%} & \textbf{-245.36\%} & \textbf{-6} & \textbf{-54.80\%} \\ \cmidrule(lr){2-3} \cmidrule(lr){4-6} \cmidrule(lr){7-9} \cmidrule(lr){10-12} \cmidrule(l){13-14} 
 & CM & 120 & 1 & - & - & 0 & -19.36\% & -2.44 & 0 & - & - & -1 & -70.45\% \\
 & SM & 120 & 0 & - & - & 2 & - & - & 3 & -83.71\% & -245.36\% & -5 & -42.18\% \\
 & CPI & 120 & 0 & - & - & 2 & -19.72\% & -3.12 & 1 & -101.77\% & -245.36\% & -3 & -58.82\% \\
 & DPI & 120 & 1 & - & - & 0 & -18.82\% & -1.76 & 2 & -8.81\% & - & -3 & -50.53\% \\
 & FLI & 120 & 1 & - & - & -2 & -17.97\% & -1.69 & 1 & -4.10\% & - & 0 & -37.65\% \\
 & NLI & 120 & 0 & - & - & 4 & -28.80\% & -12.23 & 2 & -97.60\% & -245.36\% & -6 & -65.86\% \\ \cmidrule(r){1-3} \cmidrule(lr){4-6} \cmidrule(lr){7-9} \cmidrule(lr){10-12} \cmidrule(l){13-14}
\textbf{ALL} &  & \textbf{792} & \textbf{53} & \textbf{-97.45\%} & \textbf{-274.76\%} & \textbf{-38} & \textbf{-52.65\%} & \textbf{-4.63} & \textbf{3} & \textbf{-83.71\%} & \textbf{-245.36\%} & \textbf{-18} & \textbf{-75.64\%} \\ \bottomrule
\end{tabular}
\end{adjustbox}
\caption{The comparison between the CPA with CPLEX for the \topPL{}.}
\label{PLTOP-ST-MIN-RESULTS}
\end{table}

The results reported in Table~\ref{PLTOP-ST-MIN-BC} confirm also for \topPL{} 
the remarks made on the mandatory nodes (CM and SM) and on the physical incompatibilities 
(CPI and DPI) for the \topP{.}
Considering the logical incompatibilities, the instances generated with farthest logical 
incompatibilities (FLI) seem easier than the instances with nearest logical incompatibilities 
(NLI).
Indeed, the CPA is faster to solve the FLI instances on the small benchmark set, while
it solves a larger number of FLI instances to optimality on the medium benchmark set.
Finally, on the larger benchmark set where almost no instance is solved to optimality,
it manages to find more feasible solutions on the FLI than the NLI.
Due to the large number of NO SOLS, one should take the last remark with a grain 
of salt.

The results reported in Table~\ref{PLTOP-ST-MIN-RESULTS} confirm
the fact that the CPA outperforms CPLEX for \topPL{}, even more so than for \topP{.}
As a matter of fact, the CPA finds $53$ additional optimal solutions, certifies 
$3$ additional infeasibilities, and finds $18$ additional solutions compared to CPLEX, 
all the while using approximately 
half of the CPU time and exploring almost a quarter of the nodes.
Finally, we notice that the time needed to certify the optimality and the quality of 
the gaps provided for the instances unsolved are systematically in favour of 
the CPA, sometimes by a large amount.



\subsection{Results for TOP instances}
\label{results-top}

In this section, we would like to validate the CPA on the TOP against the state-of-art 
algorithms for the problem, which are those described in \cite{Assunção2020} and 
\cite{Hanafi2020} (to the best of our knowledge). 
We denote these algorithms with AM and HMZ, respectively.
To this end, we tested the CPA to solve the classical TOP benchmark set
by running a version of the algorithm in which all the TOP-ST-MIN specific 
ingredients are disabled and no further customisation has been done.

\begin{table}[!ht]
\begin{tabular}{@{}lrrrrrrrrrrrrr@{}}
\toprule
    & \multicolumn{1}{l}{}   & \multicolumn{6}{c}{\textbf{AM vs CPA}}  & \multicolumn{6}{c}{\textbf{HMZ vs CPA}} \vspace*{1pt} \\ 
\textbf{Set} & \textbf{\#} & \multicolumn{2}{c}{\textbf{OPT}} & \multicolumn{2}{c}{\textbf{CPU(s)}} & \multicolumn{2}{c}{\textbf{GAP(\%)}} & \multicolumn{2}{c}{\textbf{OPT}} & \multicolumn{2}{c}{\textbf{CPU(s)}} & \multicolumn{2}{c}{\textbf{GAP(\%)}} \\
\cmidrule(r){1-2} \cmidrule(lr){3-8} \cmidrule(l){9-14}
1  & 54  & 54          & 54          & 3.11     & \textbf{1.23}     & -    & -             & 54          & 54          & \textbf{0.98}    & 1.23          & -             & -   \\
2  & 33  & 33          & 33          & 0.22     & \textbf{0.07}     & -    & -             & 33          & 33          & 0.13             & \textbf{0.07} & -             & -   \\
3  & 60  & 60          & 60          & 165.59   & \textbf{63.04}    & -    & -             & 60          & 60          & \textbf{37.83}   & 63.04         & -             & -   \\
4  & 60  & 44          & \textbf{50} & 2479.98  & 2731.99          & 3.19  & 1.97          & \textbf{48} & 45          & 406.89           & 1578.95       & 1.48          & 2.45 \\
5  & 78  & 62          & 62          & 715.75   & \textbf{630.19}  & 3.01  & \textbf{2.74} & 62          & 62          & \textbf{324.20}  & 630.19        & \textbf{2.62} & 2.93 \\
6  & 42  & \textbf{42} & 41          & 475.26   & 489.39           & -     & 0.88          & 39          & \textbf{41} & 100.11           & 489.39        & 0.81          & 1.76 \\
7  & 60  & 47          & \textbf{49} & 1133.23  & 1252.87          & 1.94  & 1.59          & \textbf{50} & 48          & 464.98           & 1106.88       & 1.47          & 1.93 \\ \cmidrule(r){1-2} \cmidrule(lr){3-4} \cmidrule(lr){5-6} \cmidrule(lr){7-8} \cmidrule(lr){9-10} \cmidrule(lr){11-12} \cmidrule(l){13-14}
\textbf{ALL}           & 387         & 342      & \textbf{349}     & 692.49 & 785.87       & 2.76        & 1.79        & \textbf{346}     & 343           & 190.73        & 545.35  & 1.60    & 2.27 \\ \bottomrule
\end{tabular}
\caption{The comparison between our CPA and the competitors on the TOP instances. Bold entries highlight the best algorithm with respect to the number of solved instances. When the number of solved instances is the same for all algorithms, bold entries highlight the best results in terms of running time and mip gap.}
\label{tab:TOPresults}
\end{table}

The results reported in \cite{Assunção2020} are obtained running the
algorithm on a 4.0 GHz Intel Core i7-4790k while a 3.50 GHz Intel i7 
5930k processor is used in \cite{Hanafi2020}. Both algorithms were
run using $6$ threads and a time limit of 7200 seconds.
To provide a bit fairer comparison (we recall that our cpu frequency is 2.1 
GHz), we ran the CPA twice using a different time limit rescaled with respect 
to each competitor's cpu frequency.

Table~\ref{tab:TOPresults} reports the results of such a comparison. 
The column OPT reports the number of optimally solved instances, CPU(s) the average running 
time to certify the optimality, and GAP(\%) the average mip gap of the unsolved instances.
Each of these columns report on the right the results for the CPA while on the left the results for the algorithm competitor.


As can be seen from the Table~\ref{tab:TOPresults}, the CPA is competitive with respect to the two 
state-of-art algorithms. 
With respect to the AM, our algorithm computes more optimal solutions
on the sets $4$ and $7$, and less on the set $6$.
With respect to the HMZ, our algorithm computes more optimal solutions
on the set $6$ and less on the sets $4$ and $7$.
Overall, the CPA computes $7$ more optimal solutions than AM and $3$
less than HMZ.
The CPA average running time is the same for both tests except for 
Set 4 due to the $5$ additional optimal solutions while the average mip
gap decreases.
Overall, the CPA seems a little bit slower than AM and clearly slower 
than HMZ.


Finally, we mention that partial results, i.e., only for the Set 4, are provided 
for the Branch-Cut-and-Price approach of \cite{Pessoa2020}, which solves 55 instances 
out of 60 in one hour of running time on an Intel Xeon E5-2680 v3 running at 2.50 GHz.

\section{Conclusions}
\label{sec:end}


In this paper we addressed the Team Orienteering Problem with Service Times and Mandatory 
\& Incompatible Nodes (TOP-ST-MIN), a variant of the classic Team Orienteering Problem (TOP).
The TOP-ST-MIN includes three novel features that stem from two real-world problems 
previously studied by the authors. We considered two versions of the problem, that is the 
\topP{} and the \topPL{}.
We demonstrate the NP-completeness of the feasibility problem, which points to the
fact that the TOP-ST-MIN is harder to tackle than the classical TOP.
We reported a mixed formulation and a compact formulation for the two versions showing also 
that the compact one seems to be more efficient from a computational point of view.
Based on the compact formulation, we developed a Cutting-Plane Algorithm (CPA) exploiting 
five families of valid inequalities. Further, to the best of our knowledge, our separation 
scheme for the subtour elimination constraints results to be asymptotically faster than the 
classical methodology based on the resolution of multiple max-flow/min-cut problems.

We generated \emph{ad hoc} instances in such a way to evaluate the impact of the three 
features that characterise the problem.
Extensive computational experiments showed that the CPA outperforms CPLEX in solving the 
newly generated instances. In particular, the algorithm results to be particularly 
effective for the \topPL{} where it is able to solve 53 more instances than CPLEX. 
From our analysis, the presence of mandatory nodes and logical incompatibilities seem to 
be the features that complicate the problem the most.
The CPA is also competitive with respect to the state-of-art algorithms for the TOP since 
it is able to solve almost the same number of instances.

\printbibliography

\end{document}